\numberwithin{equation}{section}
\theoremstyle{plain}
\newtheorem{thm}{Theorem}[section]
\newtheorem{cor}[thm]{Corollary}
\newtheorem{lemma}[thm]{Lemma}
\theoremstyle{definition}
\newtheorem{deff}[thm]{Definition}
\newtheorem{example}[thm]{Example}
\theoremstyle{remark}
\newcommand{\K}{\mathsf k}
\newcommand{\GKdim}{\operatorname{GKdim}}
\def\-{\text{-}}
\newcommand{\gr}{\operatorname{gr}}
\newcommand{\id}{\operatorname{id}}
\begin{document}

\title[Graphs with disjoint cycles]{Graphs with disjoint cycles\\ classification via the talented monoid}

\author{Roozbeh Hazrat}
\address{Roozbeh Hazrat: CRMDS, 
Western Sydney University\\
Australia} \email{r.hazrat@westernsydney.edu.au}

\author{Alfilgen N. Sebandal}
\address{Alfilgen N. Sebandal:
Andres Bonifacio Avenue, Tibanga\\
9200 Iligan City, Philippines} \email{alfilgen.sebandal@g.msuiit.edu.ph}

\author{Jocelyn P. Vilela}
\address{Jocelyn P. Vilela:
Andres Bonifacio Avenue, Tibanga\\
9200 Iligan City, Philippines} \email{jocelyn.vilela@g.msuiit.edu.ph}

\begin{abstract} 
We characterise directed graphs consisting of disjoint cycles via their talented monoids. We show that a graph $E$ consists of disjoint cycles precisely  when its  talented monoid $T_E$ has a certain Jordan-H\"older composition series. These are graphs whose associated Leavitt path algebras  have finite Gelfand-Kirillov dimension.  We show that this dimension can be determined as the length of certain ideal series of the talented monoid. Since $T_E$ is the positive cone of the graded Grothendieck group $K_0^{\gr}(L_\K (E))$, we conclude that for graphs $E$ and $F$, if $K_0^{\gr}(L_\K (E))\cong K_0^{\gr}(L_\K (F))$ then  $\GKdim L_\K(E) = \GKdim L_\K(F)$, thus providing more evidence for the Graded Classification Conjecture for Leavitt path algebras. 

\end{abstract}

\maketitle



\section{Introduction}

To a row-finite directed graph $E$, with vertices $E^0$ and edges $E^1$, one can naturally associate a commutative monoid $M_E$, called the \emph{graph monoid} of $E$. It is a
free commutative monoid over the vertices,  subject to identifying a vertex with the sum of vertices it is connected to: 
\begin{equation*}
M_E= \Big \langle \, v \in E^0 \, \, \Big \vert \, \,  v= \sum_{v\rightarrow u \in E^1} u \, \Big \rangle, 
\end{equation*}
when $v$ emits edges. 

The graph monoids were introduced by Ara, Moreno and Pardo~(\cite{ara2006}, \cite[\S 6]{lpabook}) in relations with the theory of Leavitt path algebras. It was shown that the group completion of $M_E$ is the Grothendieck group $K_0(L_\K(E))$, where $L_\K(E)$ is the Leavitt path algebra with coefficient in a field $\K$, associated to $E$.

The so-called \emph{talented monoid} $T_E$ of $E$, can be considered as the ``time evolution model'' of the monoid $M_E$. It is defined as 
\begin{equation*}
T_E= \Big \langle \, v(i), v \in E^0, i \in \mathbb Z  \, \,  \Big \vert \, \, v(i)= \sum_{v\rightarrow u\in E^1} u(i+1) \, \Big \rangle,
\end{equation*}
when $v$ emits edges. $T_E$ is equipped with a $\mathbb Z$-action, defined by ${}^n v(i)=v(i+n)$, where $n\in \mathbb Z$.

In this form the talented monoids were introduced in \cite{hazli} and further studied in \cite{Luiz,lia}. It was shown that the talented monoid captures certain geometry of its associated graph, 
and hence the algebraic properties of the corresponding Leavitt path algebra.  For instance, a graph has Condition (L), i.e., any cycle has an exit, if and only if the group $\mathbb Z$ acts freely on $T_E$~\cite{hazli}. Or, the period of a graph, i.e., the greatest common divisor of the lengths of all closed paths based a vertex,  can be described completely via its associated talented monoid~\cite{Luiz}. Using these results one can give finer descriptions of purely infinite simple Leavitt path algebras associated to graphs. 

The Graded Classification Conjecture states that the graded Grothendieck groups can completely classify Leavitt path algebras~(\cite{mathann},
\cite[\S 7.3.4]{lpabook}).  As the group completion of $T_E$ is the graded Grothendieck group $K_0^{\gr}(L_\K(E))$ (see~\cite{arali}),  the Graded Classification Conjecture can be restated to say that the talented monoid $T_E$ is a complete invariant for the class of Leavitt path algebras.

In this note we show that the talented monoid can be used to classify the class of graphs which consist of disjoint cycles. Specifically,   a graph $E$ consists of disjoint cycles precisely  when its  talented monoid $T_E$ has a certain Jordan-H\"older composition series (Theorem~\ref{mainthm}).
These are precisely those graphs that their associated Leavitt path algebras have finite Gelfand-Kirillov (GK) dimension. We show that the GK-dimension of a Leavitt path algebra $L_\K(E)$ is the length of a certain ideal series in $T_E$ (Theorem~\ref{gkdiml}). We conclude that for graphs $E$ and $F$, if $T_E\cong T_F$ as $\mathbb Z$-monoids, then  $\GKdim L_\K(E) = \GKdim L_\K(F)$. In particular, this shows that if there is a $\mathbb Z$-isomorphism of order groups 
$K_0^{\gr}(L_\K(E)) \cong K_0^{\gr}(L_\K(F))$ then $L_\K(E)$ and $L_\K(F)$ have the same GK-dimension.   This is yet another evidence to support the Graded Classification Conjecture.

In contrast to the talented monoid $T_E$,  the graph monoid $M_E$ can't be used to describe the GK-dimension of a Leavitt path algebra $L_\K(E)$. 
As an example, consider the graphs with disjoint cycles: 

\begin{equation}\label{ggffdd1}
{\def\labelstyle{\displaystyle}
E : \quad \,\,  \xymatrix{
& \bullet \ar@(ur,rd)\\
\bullet  \ar@(lu,ld) \ar[ru]\ar[dr] & \\
& \bullet \ar@(ur,rd)
}}
\qquad \quad \quad \quad
{\def\labelstyle{\displaystyle}
F : \quad \,\,  
\xymatrix{
& \bullet \ar@(ur,rd)\\
\bullet  \ar@(lu,ld) \ar[ru]\ar[dr]  &  \\
& \bullet
}} 
\qquad \quad \quad \quad \quad \quad
{\def\labelstyle{\displaystyle}
G : \quad \,\,  \xymatrix{
& \bullet\\
 \bullet  \ar@(lu,ld)  \ar[ru]\ar[dr]  &   \\
 & \bullet
}} 
\end{equation}

\medskip 

One can easily check that  $M_E\cong M_F \cong M_G$, but we have  $\GKdim L_\K(E)=\GKdim L_\K(F)=3$, whereas $\GKdim L_\K(G)=2$ (see Theorem~\ref{zelgk}).

The paper is organised as follows: In Section~\ref{sec2} we recall the notion of a row-finite directed graph and establish some facts about the hereditary and saturated subsets of a graph. We further discuss the notion of a $\Gamma$-monoid $T$, where $T$ is a monoid and $\Gamma$ is a group acting on $T$. We describe various order-ideals in this setting and the notion of composition series for $T$. We will then describe the Jordon-H\"older theorem in this setting. In Section~\ref{sec3} we tie the notion of graphs and monoids together and define graph monoids and talented monoids. We prove two theorems relating cycles with no exits/sinks in a graph $E$ to the  cyclic/non-comparable minimal order-ideals of the talented monoid $T_E$.  Section~\ref{sec4} contains the main results of the paper. It classifies graphs with disjoint cycles via their talented monoids (Theorem~\ref{mainthm}) and it will use the results of the previous sections to show that for graphs $E$ and $F$, if $T_E\cong T_F$ as $\mathbb Z$-monoids, then $\GKdim L_\K(E) = \GKdim L_\K(F)$ (Corollary~\ref{hfgfhfdd}).

\subsection{Notations}

Throughout we use $\subset$ for the strict inclusion and $\mathbb N$ for the set of non-negative integers. 

\section{Graphs and monoids}\label{sec2}

\subsection{Graphs with disjoint cycles} 

A directed graph $E$ is a tuple $(E^{0}, E^{1}, r, s)$, where $E^{0}$ and $E^{1}$ are
sets and $r,s$ are maps from $E^1$ to $E^0$. A graph $E$ is \emph{finite} if $E^0$ and $E^1$ are both finite. We think of each $e \in E^1$ as an edge 
pointing from $s(e)$ to $r(e)$. A graph $E$ is said to be \emph{row-finite} if for each vertex $v\in E^{0}$,
there are at most finitely many edges in $s^{-1}(v)$. A vertex $v$ for which $s^{-1}(v)$
is empty is called a \emph{sink}.  In a row-finite graph $E$, a vertex  $v\in E^{0}$ which is not a sink,  is called a \emph{regular vertex}. Throughout the paper we only consider row-finite graphs.

We use the convention that a (finite) \emph{path} $p$ in $E$ is
a sequence $p=e_{1} e_{2}\cdots e_{n}$ of edges $e_{i}$ in $E$ such that
$r(e_{i})=s(e_{i+1})$ for $1\leq i\leq n-1$. We define $s(p) = s(e_{1})$, and $r(p) =
r(e_{n})$. We say $n$ is the \emph{length} of $p$. Vertices are considered paths of length zero. A path $p=e_1e_2\dots e_n$ is called a \emph{cycle} if $s(p)=r(p)$ and for any edge $f\in E^1$ with $s(f)=s(e_i)$, for some $1\leq i \leq n$, we have $f=e_i$. 

A \emph{morphism} $\phi:E\rightarrow F$, for directed graphs $E$ and $F$, is a pair of maps (both denoted $\phi$),  $\phi:E^0 \rightarrow F^0$ and $\phi:E^1 \rightarrow F^1$  such that 
$\phi(s(e))=s(\phi(e))$ and $\phi(r(e))=r(\phi(e))$, for any $e\in E^1$. Furthermore, if the maps are injective and $|s^{-1}(v)|=|s^{-1}(\phi(v))|$, for any $v\in E^0$, then $\phi$ is called a \emph{complete morphism}. 

A subset $H \subseteq E^0$ is said to be \emph{hereditary} if
for any $e \in E^1$,  $s(e)\in H$ implies $r(e)\in H$. A subset $H
\subseteq E^0$ is called \emph{saturated} if whenever for a regular vertex $v$, $r(s^{-1}(v)) \subseteq H$ then $v\in H$. Throughout the paper we work with hereditary saturated subsets of $E^0$. If $H$ is a hereditary and saturated subset, we consider the directed graph, denoted by  $H$ again,  consisting of all vertices of $H$ and all edges emitting from these vertices. Clearly there is a complete inclusion morphism $i: H \rightarrow E$. For a subset $S$, we also need to recall the construction of hereditary and saturated subset $\langle S \rangle$ and a fact about them. Let $S$ be a hereditary subset of $E^0$. Then $\langle S \rangle :=\bigcup_{i\in \mathbb N} S_i$, where $S_0=S$ and $S_i=\{v\in E^0 \mid r(s^{-1}(v))\subseteq S_{i-1}\}$ is the smallest hereditary and saturated subset of $E$ containing $S$. This simple lemma will be used throughout the paper.

\begin{lemma} \label{longpath}
Let $E$ be a row-finite graph, $S$ a subset of $E$ and  $\langle S \rangle$  its hereditary and saturated closure. Let $v\in \langle S \rangle$. 
\begin{enumerate}[\upshape(1)]

\item There is path $p$ with $s(p)=v$ and $r(p)\in S$.

\smallskip

\item Any path starting from $v$ eventually ends in $S$, i.e., for any path $p$ with $s(p)=v$, there is a path $q$ such that $s(q)=r(p)$ and $r(q)\in S$. 

\smallskip
\item  There is an $l\in \mathbb N$ such that any path $p$ with $s(p)=v$ of length larger than $l$ ends in $S$, i.e.,  $r(p)\in S$.

\end{enumerate}
\end{lemma}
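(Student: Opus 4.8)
The plan is to exploit the layered structure of the saturation closure $\langle S \rangle = \bigcup_{i \in \mathbb N} S_i$ and to prove all three items by tracking, for each $v \in \langle S \rangle$, the least index $n = n(v)$ with $v \in S_n$; call this the \emph{level} of $v$. Two preliminary facts drive everything. First, $\langle S \rangle$ is hereditary and saturated, so every edge emitted by a vertex of $\langle S \rangle$ has its range in $\langle S \rangle$; hence any path with source $v \in \langle S \rangle$ remains inside $\langle S \rangle$, and since $S$ is itself hereditary, once such a path enters $S$ it cannot leave. Second, the level strictly drops along edges that have not yet reached $S$: if $v \in \langle S \rangle \setminus S$ then, being adjoined at some saturation step $n \ge 1$, the vertex $v$ is regular and satisfies $r(s^{-1}(v)) \subseteq S_{n-1}$, so every $e \in s^{-1}(v)$ has $n(r(e)) \le n(v) - 1$.

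For (1) I would induct on $n(v)$. If $n(v) = 0$ then $v \in S$ and the length-zero path $p = v$ suffices. If $n(v) = n \ge 1$, then $v$ is regular, so I may choose $e \in s^{-1}(v)$; its range $r(e)$ has level at most $n-1$, and the inductive hypothesis yields a path $p'$ from $r(e)$ into $S$, whence $p = e p'$ is the required path from $v$.

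Item (3) is the quantitative heart of the argument, and I expect it to be the only step needing care. Given $v$ of level $n$, I claim that $l = n-1$ works. Write a path $p = e_1 \cdots e_m$ from $v$ with vertices $v_0 = v, v_1, \dots, v_m$, where $v_i = r(e_i)$. So long as $v_0, \dots, v_{i-1}$ avoid $S$, the level-decrease gives $n(v_i) \le n - i$; as levels are non-negative, the path cannot avoid $S$ for $n$ consecutive steps, so $v_i \in S$ for some $i \le n$, and then heredity of $S$ forces $v_j \in S$ for all $j \ge i$. Consequently every path of length $m \ge n$ satisfies $r(p) = v_m \in S$, which is exactly (3).

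Finally (2) follows at once from (1): for any path $p$ with $s(p) = v$, heredity of $\langle S \rangle$ gives $r(p) \in \langle S \rangle$, and applying (1) to $r(p)$ produces a path $q$ from $r(p)$ into $S$. The main obstacle, as noted, is the bookkeeping in (3): one must verify that the level genuinely decreases by at least one at each edge until $S$ is reached, and it is precisely the hypothesis that $S$ (and not merely $\langle S \rangle$) is hereditary that prevents a path from re-emerging from $S$ and thereby guarantees the uniform bound $l$.
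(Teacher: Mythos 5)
Your proof is correct and takes essentially the same route as the paper's: both arguments track membership in the saturation layers $S_i$ and use the fact that this index drops along edges until the path lands in $S$. Your write-up is simply more explicit where the paper is terse -- in particular, you justify that vertices of $\langle S \rangle \setminus S$ are regular (which underlies the path-building in (1)) and invoke heredity of $S$ to keep long paths inside $S$ for (3), both of which the paper leaves implicit.
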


\begin{proof} The following argument implies all three statements of the lemma. 
Let $v\in \langle S \rangle$.  Then there is an $l\in \mathbb N$ such that $v\in S_l$. Let $p=e_1e_2\dots e_l$ be a path of length $l$ starting from $v$. Since $v\in S_l$, by the construction of $S_l$, we have $r(e_1)=s(e_2) \in S_{l-1}$. Thus $r(e_2)=s(e_3) \in S_{l-2}$. Continuing, $r(p)=r(e_l)\in S_{l-l}=S$. 
\end{proof}

We need the following correspondence between the hereditary and saturated subsets of a graph and its quotient graph. 
For hereditary saturated subsets $H$ and $K$ of the graph $E$ with  $H \subseteq  K$, define the quotient graph $K / H $ as a graph such that 
$(K/ H)^0=K^0\setminus H^0$ and $(K/H)^1=\{e\in E^1\;|\; s(e)\in K, r(e)\notin H\}$. The source and range maps of $K/H$ are restricted from the graph $E$. If $K=E^0$, then $K/H$ is the \emph{quotient graph} $E/H$ (\cite[Definition~2.4.11]{lpabook}). With this construction, 
 $(K/ H)^0$ is a hereditary and saturated subset of the graph $E/H$, which we also denote $K/H$.  We have a natural (inclusion) morphism $\phi:E/H\rightarrow E$ which is injective on vertices and edges.  
 
 The following lemma will be used in a crucial way in Theorem~\ref{gkdiml} to calculate the Gelfand-Kirillov dimension of a Leavitt path algebra from its associated talented monoid.  Denote $\mathcal H(E,H)$ to be all hereditary and saturated subsets of the graph $E$ containing the hereditary and saturated subset $H$ and $\mathcal H(E):=\mathcal H(E, \emptyset)$. 

\begin{lemma} \label{gdeppok}
Let $E$ be a row-finite graph and $H$ a hereditary and saturated subset of $E$. 
There is a one to one correspondence 
\begin{align}\label{gfhfgdgdh3}
\psi:\mathcal H(E,H) &\longrightarrow \mathcal H(E/H),\\
K &\longmapsto K/H\notag
\end{align}
such that $(E/H) \large /(K/H)=E/K$.
\end{lemma}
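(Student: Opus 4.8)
The plan is to establish the bijection by exhibiting an explicit inverse and then verify the quotient-compatibility identity. Recall that the map $\psi$ sends a hereditary saturated set $K$ with $H\subseteq K\subseteq E^0$ to $K/H = K^0\setminus H^0$, viewed as a subset of $(E/H)^0 = E^0\setminus H^0$. First I would define a candidate inverse $\psi^{-1}:\mathcal H(E/H)\to\mathcal H(E,H)$ that sends a hereditary saturated subset $L$ of $E/H$ to $L\cup H$, viewed as a subset of $E^0$. The bulk of the argument is checking that these two maps are well defined, i.e.\ that $K/H$ is hereditary and saturated in $E/H$ whenever $K$ is hereditary and saturated in $E$ and contains $H$, and conversely that $L\cup H$ is hereditary and saturated in $E$ whenever $L$ is hereditary and saturated in $E/H$. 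Once both maps are shown to land in the correct sets, the fact that they are mutually inverse is immediate from set-theoretic bookkeeping, since on the level of vertex sets one map adjoins $H^0$ and the other removes it.

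The heart of the verification is the hereditary and saturated conditions, and I would treat them edge by edge using the definition $(E/H)^1 = \{e\in E^1 \mid s(e)\in K,\ r(e)\notin H\}$ from the construction of the quotient graph. For \emph{hereditary}: given $K$ hereditary in $E$ and an edge $e$ in $E/H$ with $s(e)\in K/H$, I would observe that $s(e)\in K$ forces $r(e)\in K$ (since $K$ is hereditary in $E$), and $r(e)\notin H$ by the definition of edges in $E/H$, so $r(e)\in K/H$. For \emph{saturated}: given a regular vertex $v$ of $E/H$ with $v\notin H$ and $r_{E/H}(s_{E/H}^{-1}(v))\subseteq K/H$, I would need to deduce $v\in K$. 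Here some care is required, because the edges emitted by $v$ \emph{inside} $E/H$ are only those whose range avoids $H$; the edges from $v$ landing in $H$ are invisible in the quotient. However, since $H\subseteq K$ and $K$ is hereditary, every range of an edge from $v$ in $E$ lands in $K$ (either it is one of the $E/H$-ranges, hence in $K/H\subseteq K$, or it lands in $H\subseteq K$), so $r_E(s_E^{-1}(v))\subseteq K$, and saturation of $K$ in $E$ gives $v\in K$. The reverse directions for $\psi^{-1}$ are analogous, using that $H$ itself is hereditary and saturated to handle vertices of $H$ and their emitted edges.

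The identity $(E/H)\big/(K/H) = E/K$ I would verify by comparing vertex sets and edge sets directly. On vertices, $((E/H)/(K/H))^0 = (E/H)^0\setminus(K/H)^0 = (E^0\setminus H^0)\setminus(K^0\setminus H^0) = E^0\setminus K^0 = (E/K)^0$, using $H\subseteq K$. On edges, both sides consist of those $e\in E^1$ with $s(e)\in K$ and $r(e)\notin K$: on the $E/K$ side this is the definition, while on the iterated-quotient side an edge must first survive into $E/H$ (so $r(e)\notin H$) and then satisfy the $K/H$-quotient condition, which unwinds to $s(e)\in K$ and $r(e)\notin K$; the constraint $r(e)\notin H$ is then automatic from $r(e)\notin K\supseteq H$. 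I expect the main obstacle to be the saturation check, specifically handling the subtlety that a vertex's out-edges in the quotient graph form a proper subset of its out-edges in $E$, so one must invoke $H\subseteq K$ to bridge the gap between quotient-saturation and $E$-saturation; the hereditary checks and the final set-theoretic identity are routine once the edge-set description is in hand.
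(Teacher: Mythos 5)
The paper offers no argument here at all --- it declares the proof ``routine'' and leaves it to the reader --- so the only question is whether your verification is sound, and in structure it is: the explicit inverse $L\mapsto L\cup H$, the hereditary check, and the saturation check (where you correctly isolate the one real subtlety, namely that edges from $v$ into $H$ are invisible in $E/H$, and bridge it using $H\subseteq K$ together with saturation of $K$ in $E$) constitute exactly the routine argument the paper has in mind, as does the vertex-set computation for the final identity.

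One slip in the edge-set comparison needs fixing. You assert that both edge sets equal $\{e\in E^1 \mid s(e)\in K,\ r(e)\notin K\}$; but since $K$ is hereditary, $s(e)\in K$ forces $r(e)\in K$, so the set you have literally written down is empty. The error comes from specializing the paper's template $(K/H)^1=\{e\in E^1 \mid s(e)\in K,\ r(e)\notin H\}$ incorrectly: to obtain $E/K$ one must put $E^0$ in the ambient slot and $K$ in the removed slot, giving $(E/K)^1=\{e\in E^1 \mid r(e)\notin K\}$ with no constraint on the source (equivalently, $s(e)\notin K$ is automatic by heredity of $K$). Likewise the iterated quotient unwinds to the conditions $r(e)\notin H$ and $r(e)\notin K/H$, i.e.\ to $r(e)\notin K$, again with no source condition. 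Once ``$s(e)\in K$'' is replaced by ``$s(e)\notin K$'' (or simply deleted), the two descriptions coincide and the rest of your argument --- including the observation that $r(e)\notin H$ is automatic because $H\subseteq K$ --- goes through verbatim.
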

\begin{proof}
The proof is routine and we leave it to the reader. 
\end{proof}

We refer the reader to \cite{lpabook} for other concepts on graph theory related to path algebras.   The following facts will be used throughout the paper including in the main Theorems~\ref{mainthm} and \ref{gkdiml}. We collect them here for the convenience of the reader.

\begin{lemma} \label{helplemm}
Let $E$ be a row-finite graph, $H$ a hereditary and saturated subset of $E$ and $\phi:E/H \rightarrow E$ the natural morphism of graphs. 

\begin{enumerate}[\upshape(1)]

\item If $E$ has no sinks then the graph $E/H$ has no sink.  

\smallskip

\item If $H$ contains all the sinks, then the graph $E/H$ has no sink. 

\smallskip 

\item Let $H$ be generated by all sinks in $E$.  Then there is one-to-one correspondence between cycles in $E/H$ and $E$ via the morphism $\phi$.

\smallskip

\item  Let $H$ be generated by vertices on cycles with no exits. Then there is one-to-one correspondence between sinks in $E/H$ and $E$ via the morphism $\phi$.

\end{enumerate}
\end{lemma}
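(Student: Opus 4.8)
The plan is to derive all four statements from two elementary facts about the quotient graph: the saturation of $H$ and Lemma~\ref{longpath}. Throughout, recall that $(E/H)^0=E^0\setminus H$, that an edge $e\in E^1$ survives in $E/H$ precisely when $r(e)\notin H$ (equivalently, since $H$ is hereditary, when $s(e)\notin H$), and that $\phi$ is the inclusion, which is injective on vertices and on edges. The single recurring mechanism is this: if $v\notin H$ is a \emph{regular} vertex of $E$ all of whose emitted edges have range in $H$, then $r(s^{-1}(v))\subseteq H$, and saturation of $H$ forces $v\in H$, a contradiction. Hence such a $v$ must emit in $E$ at least one edge surviving in $E/H$.

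For (1) and (2), let $v\in (E/H)^0$, so $v\notin H$. In case (1) every vertex of $E$ is regular, while in case (2) the vertex $v$ is regular because $H$ contains all sinks and $v\notin H$. In either case the mechanism above produces an edge $e\in E^1$ with $s(e)=v$ and $r(e)\notin H$, which survives in $E/H$; thus $v$ is not a sink of $E/H$. Since (1) is the special case of (2) in which there are no sinks at all, both statements share one argument.

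For (3) take $H=\langle\,\text{sinks}\,\rangle$. Since $\phi$ is an injective inclusion, any cycle of $E/H$ is carried to a closed path of $E$ on the same (distinct) vertices and edges, hence to a cycle of $E$, and distinct cycles map to distinct cycles; this yields an injection. For surjectivity I would show that no vertex on a cycle of $E$ can lie in $H$: if $v$ sits on a cycle, then travelling around the cycle yields paths starting at $v$ of arbitrarily large length whose range is again a cycle vertex, hence never a sink; but by Lemma~\ref{longpath}(3) every vertex of $\langle\,\text{sinks}\,\rangle$ admits a length bound beyond which every emitted path terminates at a sink. This contradiction shows that the entire cycle, vertices and edges, lies in $E/H$, so each cycle of $E$ is $\phi$ of a cycle of $E/H$. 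I expect this surjectivity step to be the main obstacle: the crux is precisely playing the unbounded length of cyclic paths against the terminating bound of Lemma~\ref{longpath}(3).

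For (4) take $H=\langle C\rangle$, where $C$ is the set of vertices on cycles without exits. A sink $v$ of $E$ cannot lie in $H$: by Lemma~\ref{longpath}(1) a vertex of $\langle C\rangle$ admits a path reaching $C$, but a sink emits nothing, so its only path is trivial, which would force $v\in C$, impossible since a cycle vertex emits an edge. Hence $v\in (E/H)^0$, and it remains a sink there because it emits no edge in $E$ at all. Conversely, if $w$ is a sink of $E/H$ then $w\notin H$; were $w$ to emit an edge in $E$, each such edge would have range in $H$ (otherwise it would survive in $E/H$, contradicting that $w$ is a sink there), so $r(s^{-1}(w))\subseteq H$, and saturation would give $w\in H$, a contradiction. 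Thus $w$ is a sink of $E$, and $\phi$ restricts to a bijection on sinks. The saturation argument invoked here and in (1), (2) is routine compared with the surjectivity in (3).
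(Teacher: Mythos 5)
Your proof is correct and takes essentially the same route as the paper's: (1)/(2) and the converse half of (4) by the saturation argument, (3) by pitting arbitrarily long paths around a cycle (the paper uses the powers $c^l$) against the terminating bound of Lemma~\ref{longpath}(3), and the forward half of (4) by Lemma~\ref{longpath}(1). The only slip is your unused parenthetical claim that an edge survives in $E/H$ if and only if $s(e)\notin H$ --- hereditariness gives only the implication $r(e)\notin H \Rightarrow s(e)\notin H$, not its converse --- but every actual step of your argument uses the correct criterion $r(e)\notin H$, so nothing is affected.
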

\begin{proof}
(1) [and (2)] Suppose $v$ is a sink in $E/H$, which in particular, implies that $v\in E\backslash H$. Since $E$ has no sinks [$H$ contains all the sinks], then there are edges emitting from $v$ in $E$. By construction of $E/H$, in order $v$ to be a sink in $E/H$, all the edges emitting from $v$ should land in $H$. But since $H$ is saturated, this means $v$ has to be in $H$ which is not the case. Thus $E/H$ has no sinks. 

(3) Since the map $\phi:E/H \rightarrow E$ is an inclusion, any cycle appearing in $E/H$, does appear in $E$ as well. Suppose $c=e_1\dots e_k$ is a cycle in $E$ based on the vertex $v$. We show that $v \not \in H$.  If $v\in H$,  then by Lemma~\ref{longpath}, any long enough path starting from $v$ ends in a sink (the generating set of $H$). Thus considering the path  $c^l$, where $l\in \mathbb N$ is large enough, implies that $v$ is a sink which is not the case. Thus $v$ is not in $H$ and consequently all the vertices and edges of $c$ are not in $H$. Therefore the cycle $c$ appears in $E/H$. 

(4) If $v$ is a sink in $E$, then $v\not \in H$. Otherwise by Lemma~\ref{longpath},  there is path from $v$ which connects to a cycle, a contradiction to $v$ being a sink. Thus any sink in $E$ appears as a sink in $E/H$. Furthermore, if $w \in E\backslash H$ is not a sink in $E$, it can't be a sink in $E/H$, otherwise the range of all the edges emitting from $w$ has to be in $H$, i.e., $w\in H$ which is not the case. 
\end{proof}

In this note we will work with graphs which consists of disjoint cycles (we don't necessarily assume the graphs are connected). A directed graph  that every vertex  is the base of at most one cycle is called \emph{a graph with disjoint cycles}. Clearly in such a graph any two distinct cycles do not have a common vertex.  Acyclic graphs (i.e., graphs with no cycles) are examples of graphs with disjoint cycles.   For two distinct cycles $C_1$, $C_2$ , we write $C_1 \Rightarrow C_2$, if there exists a path that starts in $C_1$ and ends in $C_2$. Figure~\ref{ggffdd1} shows graphs with disjoint cycles. 

Among graphs with disjoint cycles, we distinguish two classes: We say a graph with disjoint cycles is a \emph{multiheaded-comet graph} if no cycles has an exit. We say such a graph is a \emph{comet graph} if it has only one cycle. 

For our induction steps later in the paper, we need the following Lemma  regarding the chain of cycles in a graph with disjoint cycles. Although one can give a purely graph theoretical argument, we prove the lemma using the talented monoid in Section~\ref{talmon} to exhibit the techniques used in the paper. 

\begin{lemma}\label{hnhngff}
Let $E$ be a finite graph with disjoint cycles and no sinks. Let $H$ be the hereditary and saturated subset generated by all cycles with no exits in $E$. Then the quotient $E/H$ is a graph with disjoint cycles and no sinks. Furthermore, if $C_1\Rightarrow C_2\Rightarrow \cdots \Rightarrow C_{i-1} \Rightarrow C_{i}$ is a maximal chain of cycles in $E$ then  $C_1\Rightarrow C_2\Rightarrow  \cdots \Rightarrow C_{i-1}$ is a maximal chain of cycles in $E/H$. 
\end{lemma}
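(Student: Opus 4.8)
The plan is to give a direct graph-theoretic proof, deferring the talented-monoid argument to the later section. I will use throughout that the natural morphism $\phi\colon E/H\to E$ is injective on vertices and edges and commutes with source and range, that $H=\langle S\rangle$ where $S$ is the set of vertices lying on cycles with no exit, and that in a graph with disjoint cycles distinct cycles are vertex-disjoint, so that the reachability relation $\Rightarrow$ on cycles is a strict partial order. I will also record the elementary fact $(\ast)$: in a finite graph with no sinks and disjoint cycles, a cycle $C$ with an exit satisfies $C\Rightarrow C'$ for some cycle $C'$; following the exit produces, by finiteness and absence of sinks, an infinite path that must settle on a cycle $C'$, and vertex-disjointness prevents it from returning to $C$, so $C'\neq C$. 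The two easy assertions come first: since $E$ has no sinks, Lemma~\ref{helplemm}(1) gives that $E/H$ has no sinks; and since $\phi$ sends each cycle of $E/H$ to a cycle of $E$ and distinct cycles to distinct cycles, a vertex of $E/H$ lying on two distinct cycles would map to a vertex of $E$ lying on two distinct cycles, which is impossible, so $E/H$ has disjoint cycles.

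The structural heart is the claim that, under $\phi$, the cycles of $E/H$ are exactly the cycles of $E$ possessing an exit. If $C$ has no exit then $C^0\subseteq S\subseteq H$ and $C$ is deleted. If $C$ has an exit then, by vertex-disjointness, none of its vertices lies on a no-exit cycle, so $C^0\cap S=\emptyset$; traversing $C$ repeatedly yields, from any $v\in C^0$, arbitrarily long paths ending outside $S$, whence $v\notin H$ by the contrapositive of Lemma~\ref{longpath}(3). Thus every vertex and edge of $C$ survives and $C$ is a cycle of $E/H$; conversely a cycle of $E/H$ maps to a cycle of $E$ all of whose vertices avoid $H$, so it is not a no-exit cycle. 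The same computation shows that a connecting path from a cycle $C_j$ to a cycle $C_{j+1}$ that has an exit has all its vertices outside $H$, since from each of them one reaches $C_{j+1}$ and then loops around it avoiding $S$; such connecting paths therefore survive in $E/H$.

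Now let $C_1\Rightarrow\cdots\Rightarrow C_i$ be a maximal chain of cycles in $E$, read---as the induction requires---as a chain of greatest length. Its terminal cycle $C_i$ has no exit, for otherwise $(\ast)$ would append a further cycle and contradict maximality; hence $C_i$ is deleted in $E/H$. Each $C_j$ with $j<i$ has an exit (the step toward $C_{j+1}$) and so survives, together with the connecting paths, giving a chain $C_1\Rightarrow\cdots\Rightarrow C_{i-1}$ in $E/H$ of length $i-1$. It is of greatest length there: any chain $D_1\Rightarrow\cdots\Rightarrow D_k$ in $E/H$ pushes forward under $\phi$ to a chain of cycles-with-exit in $E$, and applying $(\ast)$ to $D_k$ appends one more cycle, producing a chain of length $k+1$ in $E$; maximality of $i$ forces $k\le i-1$. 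Hence $C_1\Rightarrow\cdots\Rightarrow C_{i-1}$ is a maximal chain in $E/H$.

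The point I would be most careful about is the meaning of \emph{maximal chain}. Deleting the last cycle of a merely inclusion-maximal chain need not give an inclusion-maximal chain of $E/H$: if $C_{i-1}$ has one no-exit successor and one exit-successor that continues further, then $C_1\Rightarrow\cdots\Rightarrow C_{i-1}$ can still be prolonged after passing to $E/H$. This forces the reading ``chain of greatest length,'' and the length bookkeeping above---every longest chain of $E$ ends in a no-exit cycle, whose deletion lowers the maximal length by exactly one---is exactly what makes the statement true. The remaining care goes into fact $(\ast)$ and the partial-order property of $\Rightarrow$, both of which rest on vertex-disjointness of cycles and which I would isolate as a short preliminary observation.
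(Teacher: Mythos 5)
Your proof is correct, and it is genuinely different from the paper's: it is exactly the ``purely graph theoretical argument'' that the paper announces is possible but deliberately avoids, choosing instead to ``exhibit the techniques used in the paper'' by working inside $T_E$. Concretely, the paper obtains the no-sinks and disjoint-cycles claims the same way you do (Lemma~\ref{helplemm}(1) and injectivity of the morphism $E/H\to E$), but then it shows via Lemma~\ref{longpath} and Theorem~\ref{comemaineme} that $I=\langle H\rangle$ is a \emph{cyclic} ideal of $T_E$; since a vertex $v$ on a cycle with an exit satisfies ${}^n v<v$, it is aperiodic, hence cannot lie in the cyclic ideal $I$, hence $v\notin H$ and $C_{i-1}$ survives in $E/H$ --- after which the maximality of the truncated chain is dismissed as ``an easy argument''. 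You replace the periodicity mechanism by the contrapositive of Lemma~\ref{longpath}(3) (arbitrarily long paths from vertices of exit-cycles, and of connecting paths, end outside the generating set $S$), and you supply the combinatorics the paper omits: the fact $(\ast)$ and the antisymmetry of $\Rightarrow$, which together give both inequalities in the length count (surviving cycles are exactly the exit-cycles; every greatest-length chain of $E$ ends in a no-exit cycle; every chain of $E/H$ pushes forward and extends by one in $E$). Each route buys something: the paper's proof establishes in passing that $\langle H\rangle$ is cyclic, a fact reused verbatim in the induction of Lemma~\ref{mainlemma}, and illustrates the monoid--graph dictionary that is the theme of the paper; yours is elementary and self-contained, actually proves the maximality assertion rather than deferring it, and --- most valuably --- pins down that ``maximal chain'' must be read as ``chain of greatest length'': your observation that the statement fails for merely inclusion-maximal chains (a no-exit successor alongside an exit-successor of $C_{i-1}$) is a genuine subtlety the paper never addresses, and the greatest-length reading is precisely the one the induction in Lemma~\ref{mainlemma} requires.
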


For a (row-finite) graph $E$, the notion of Leavitt path algebras has received substantial attentions. We denote a Leavitt path algebra with coefficient in a field $\K$, by $L_\K(E)$. We refer the reader to the book \cite{lpabook} for all the background in the subject. We recall the main theorem of \cite{zel}, which characterises when a Leavitt path algebra has a finite Gelfand-Kirillov dimension.

\begin{thm}[Alahmadi, Alsuhami, Jain, Zelmanov~\cite{zel}]\label{zelgk}
Let $E$ be a finite graph.

\begin{enumerate}[\upshape(i)]
\item  The Leavitt path algebra $L_\K(E)$ has polynomially bounded growth if and only if $E$ is a graph with disjoint cycles. 

\smallskip

\item  If $d_1$ is the maximal length of a chain of cycles in $E$, and $d_2$ is the maximal length of chain of cycles with an exit, then $\GKdim L_\K(E) = \max(2d_1-1, 2d_2)$.
\end{enumerate}
\end{thm}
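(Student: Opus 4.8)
The plan is to bypass the monoid machinery and compute the growth of the standard filtration of $L_\K(E)$ directly, reading both the dichotomy in (i) and the exact exponent in (ii) off a combinatorial count of pairs of paths organised by the chain structure of the cycles. First I would fix the finite-dimensional generating subspace $V$ spanned by $E^0$, $E^1$ and the ghost edges $(E^1)^*$, so that $\GKdim L_\K(E)=\limsup_n \log_n \dim_\K\!\big(\sum_{i\le n}V^i\big)$. By the normal form for Leavitt path algebras every element is a $\K$-combination of monomials $\alpha\beta^*$ with $\alpha,\beta$ paths satisfying $r(\alpha)=r(\beta)$; up to a harmless rescaling of $n$, the dimension $\dim_\K(\sum_{i\le n}V^i)$ equals the number of \emph{reduced} such monomials with $|\alpha|+|\beta|\le n$. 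The whole problem thus becomes a weighted count of pairs of paths into a common vertex.

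For (i) I would count paths of length $m$ through the adjacency matrix $A$ of $E$ (with $A_{uv}$ the number of edges $u\to v$): this count is controlled by the entries of $A^m$ and is polynomially bounded exactly when the spectral radius satisfies $\rho(A)\le 1$. For a nonnegative integer matrix this holds precisely when every nontrivial strongly connected component is a single cycle, that is, when no vertex is the base of two distinct cycles, i.e. when $E$ is a graph with disjoint cycles. Conversely, a vertex based on two distinct cycles produces $\gtrsim 2^{cm}$ distinct closed paths of length $\le m$, hence exponentially many independent monomials $\alpha v^*$, so the growth is not polynomially bounded. This settles (i).

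For (ii), assuming disjoint cycles, I would parametrise a path ending at a vertex $v$ by how many times it winds around each cycle of a maximal chain $C_1\Rightarrow\cdots\Rightarrow C_{j_v}$ reaching $v$; the number of such paths of length $m$ then grows like $m^{j_v-1}$. Pairing two paths into a common range $v$ and imposing $|\alpha|+|\beta|\le n$ gives, naively, $\Theta(n^{2d_1})$ monomials, where $d_1$ is the longest chain of cycles. The exponent is corrected by the Cuntz--Krieger relation on a cycle \emph{without} an exit: there each vertex emits a unique edge $e$, so $ee^*=v$ and $\alpha\beta^*$ collapses to depend only on the \emph{difference} of the winding numbers of $\alpha$ and $\beta$ in that terminal cycle --- exactly as $L_\K$ of a single loop is $\K[x,x^{-1}]$ and that of a cycle of length $\ell$ is $M_\ell(\K[x,x^{-1}])$, both of growth $1$. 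This kills one of the two winding parameters attached to the terminal cycle, lowering the count to $\Theta(n^{2d_1-1})$ whenever a maximal chain ends in an exit-free cycle; along a chain in which \emph{every} cycle has an exit (in particular every intermediate cycle, which must emit an exit to reach the next one) no collapse occurs and the full $\Theta(n^{2d_2})$ survives. Taking the largest contribution over all chains yields $\GKdim L_\K(E)=\max(2d_1-1,\,2d_2)$.

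The main obstacle is turning this collapse heuristic into sharp two-sided bounds: one must fix a normal form in which windings around a common exit-free cycle contribute a single integer parameter, while windings around cycles with exits stay independent, and then match upper and lower polynomial estimates for $\dim_\K(\sum_{i\le n}V^i)$. The cleanest route I would take is induction on the number of cycles. After a preliminary reduction handling sinks, let $H$ be the hereditary and saturated subset generated by the exit-free (necessarily terminal) cycles and peel it off through the quotient graph $E/H$ of Lemma~\ref{gdeppok}, using Lemma~\ref{helplemm} to track cycles and sinks and Lemma~\ref{hnhngff} to see that every maximal chain loses exactly its terminal cycle in $E/H$ (so $d_1$ drops by one). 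Each piece of $H$ is a disjoint exit-free cycle of growth $1$, computed directly; the delicate point is that the decisive chains \emph{cross} the splitting (a path may wind through cycles-with-exits living in $E/H$ and then terminate in an exit-free cycle of $H$), so I must control how passing to $E/H$ reshapes the chains of cycles-with-exits --- in particular a cycle all of whose exits pointed into $H$ becomes exit-free in $E/H$ --- and show that, stage by stage, the contributions assemble to exactly $\max(2d_1-1,2d_2)$. That reconciliation is the step I expect to be the most involved.
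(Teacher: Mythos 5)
First, a point of reference: the paper itself contains no proof of this statement. Theorem~\ref{zelgk} is imported verbatim from \cite{zel}, and all the authors add afterwards is the easy reformulation~(\ref{gkformula11}). So your attempt can only be measured against the original argument of \cite{zel}, whose strategy your skeleton in fact reproduces: reduce $\GKdim$ to counting reduced monomials $\alpha\beta^*$ with $r(\alpha)=r(\beta)$; obtain (i) from the dichotomy ``polynomial path growth iff $\rho(A)\le 1$ iff every nontrivial strongly connected component is a single cycle'', with linear independence of distinct paths giving the exponential lower bound; obtain (ii) from the winding-number parametrisation of paths along a chain of cycles together with the collapse $cc^*=v$ on an exit-free cycle $c$, which replaces the two terminal winding parameters of a pair $(\alpha,\beta)$ by their difference. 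All of these ingredients are correct.

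The genuine gap is in the step you yourself flag as the most involved, and the repair you propose would not work as stated. You plan to peel off the hereditary saturated closure $H$ of the exit-free cycles and assemble the answer by induction through $E/H$, justifying the base contribution by ``each piece of $H$ is a disjoint exit-free cycle of growth $1$''. That computes $\GKdim L_\K(H)$, the Leavitt path algebra of the subgraph $H$; but the object that actually carries the growth is the ideal $I(H)$ of $L_\K(E)$, and the two are very different. The top-degree monomials are precisely the cross terms $\alpha c^{m}\beta^*$ in which $\alpha,\beta$ wind through cycles surviving in $E/H$ before terminating in an exit-free cycle $c$ inside $H$; these lie in $I(H)$ while being invisible both to $L_\K(E/H)$ and to $L_\K(H)$. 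Concretely, for a single loop with an exit into a second, exit-free loop, one has $\GKdim L_\K(E/H)=\GKdim L_\K(H)=1$ yet $\GKdim L_\K(E)=3$: no stage-by-stage bookkeeping of those two numbers produces the $3$. Since the growth of $L_\K(E)$ is not determined by the growth of the quotient together with that of the subgraph algebra, the induction collapses back into what \cite{zel} actually does, namely one global two-sided estimate: a lower bound from exhibiting, for each chain $C_1\Rightarrow\cdots\Rightarrow C_d$, a linearly independent family of monomials indexed by winding numbers (with only the difference surviving in an exit-free terminal cycle), and an upper bound from showing that every reduced monomial is determined by such winding data up to boundedly many choices of connecting segments. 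Your write-up needs that global count; the quotient-graph induction cannot substitute for it.
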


We can easily see that one can replace $d_2$ by the maximal length of chain of cycles which ends in a sink (thus $d_2$ can be zero here) and the formula can be written as 
\begin{equation}\label{gkformula11}
\GKdim L_\K(E)= \begin{cases}
2d_1 &\text{if $d_1=d_2$;}\\
2d_1-1 &\text{if $d_1 \not = d_2$.}
\end{cases}
\end{equation}

We note that the Leavitt path algebras of these graphs with regard to their irreducible representations were further investigated by Ara and Rangaswamy \cite{araranga,ranga}. 

\subsection{$\Gamma$-monoids}

Let $M$ be a commutative (abelian) monoid.  We define the \emph{algebraic}  pre-ordering on the monoid $M$ by $a\leq b$ if $b=a+c$, for some $c\in M$. 
Throughout we write $a \parallel b$ if the elements $a$ and $b$ are not comparable.  A commutative monoid $M$ is called \emph{conical} if $a+b=0$ implies $a=b=0$ and it is called \emph{cancellative} if $a+b=a+c$ implies $b=c$, where $a,b,c\in M$. The monoid $M$ is called \emph{refinement} if $a+b=c+d$ then there are $e_1,e_2,e_3,e_4\in M$ such that $a=e_1+e_2$, $b=e_3+e_4$ and $c=e_1+e_3$, $d=e_2+e_4$. 
An element $0 \not = a\in M$ is called an \emph{atom} if $a=b+c$ then $b=0$ or $c=0$. An element $0 \not=a\in M$ is called \emph{minimal} if $0 \not = b\leq a$ then $a\leq b$. When $M$ is conical and cancellative, these notions coincide with the more intuitive definition of minimality, i.e., $a$ is minimal if $0\not = b\leq a$ then $a=b$. The monoids of interest in the paper, the talented monoids $T_E$ of directed graphs $E$, are conical, cancellative and refinement and thus all these concepts coincide~\cite{arali}. A submonoid $I$ of $M$ is called \emph{normal} if $a,a+b\in I$ implies $b\in I$. Furthermore $I$ is called an \emph{order-ideal} if $a+b\in I$ implies $a,b\in I$. We refer the reader to the book of Wehrung~\cite{wehrung} for a comprehensive treatment of such monoids and their relations with several branches of mathematics.

Let $T$ be a commutative monoid with a group $\Gamma$ acting on it.  (Throughout we use the letter $T$ to denote a monoid with $\Gamma$-actions.)  For $\alpha \in \Gamma$ and $a\in T$, we denote the action of $\alpha$ on $a$ by ${}^\alpha a$. 
A monoid homomorphism $\phi:T_1 \rightarrow T_2$ is called $\Gamma$-\emph{module homomorphism} if $\phi$ respects the action of $\Gamma$, i.e., $
\phi({}^\alpha a)={}^\alpha \phi(a)$.  A $\Gamma$-\emph{normal} (\emph{order-ideal}) of a $\Gamma$-monoid $T$ is a  normal (order-ideal) $I$ of $T$  which is closed under the action of $\Gamma$.  
We say $T$ is a \emph{simple} 
$\Gamma$-\emph{monoid} if the only $\Gamma$-order-ideals of $T$ are $0$ and $T$.

Throughout we assume that the group $\Gamma$ is abelian. Indeed in our setting of graph algebras, this group is the group of integers $\mathbb Z$. We use the following terminologies throughout this paper: For a $\Gamma$-monoid $T$, we say an element $a\in T$ is \emph{periodic} if there is an $0 \not = \alpha \in \Gamma$ such that ${}^\alpha a =a$. If $a\in T$ is not periodic, we call it \emph{aperiodic}. We denote the orbit of the action of $\Gamma$ on an element $a$ by $O(a)$, so $O(a)=\{{}^\alpha a \mid \alpha \in \Gamma \}$.

For $a\in T$, we denote the  $\Gamma$-order-ideal generated by the element $a$ by $\langle a \rangle $. It is easy to see that 
\begin{equation}\label{oridealhj}
\langle a \rangle=\Big \{ x \in M \mid x \leq \sum_{i\in \Gamma} {}^\alpha a \Big \}. 
\end{equation}

In the setting of $\Gamma$-monoids, several types of ideals can be considered. These ideals will have specific description in the setting of graph monoids.

\begin{deff}\label{noncomdd}
Let $T$ be a $\Gamma$-monoid and $I$ an $\Gamma$-order-ideal of $T$. We say 
\begin{enumerate}[\upshape(i)]
\item $I$ is a \emph{cyclic ideal} if for any $x\in I$, there is an $0\not= \alpha \in \Gamma$ such that ${}^\alpha x=x$;

\item $I$ is a \emph{comparable ideal} if for any $x\in I$, there is an $ 0\not= \alpha \in \Gamma$ such that ${}^\alpha x \geq  x$;

\item $I$ is a \emph{non-comparable ideal} if for any $0\not = x\in I$, and any $0\not=\alpha \in \Gamma$, we have  ${}^\alpha x \parallel x$.

\end{enumerate}

\begin{example}
Let $T=\mathbb N \oplus \mathbb N \oplus \mathbb N \oplus \mathbb N$ be a free abelian monoid with the action of $\mathbb Z$ on $T$ defined by 
${}^1 (a,b,c,d)= (d,a,b,c)$ and extended to $\mathbb Z$. Then $T$ is a cyclic monoid as for any $x\in T$ we have ${}^4x=x$. In fact $T$ is a simple $\mathbb Z$-monoid and one can show that $T$ coincides with the talented monoid $T_E$ of the graph $E$ of  a cycle of length $4$. 
\end{example}

Let $I$ be a submonoid of a monoid $M$. Define an equivalence relation $\sim_{I}$ on $M$ as follows: For $a, b\in M$,  $a\sim_{I} b$ if there exist $i,j\in I$ such that  $a+i=b+j$ in $M$. This is a congruence relation  and thus one can form the quotient monoid   $M/\sim$ which we will denote by $M/I$. 
 The notion of $a\sim_{I} b$ is equivalent to $(a+I) \cap (b+I) \not = \emptyset$. Observe that $a\sim_{I} 0$ in $M$ for any $a\in I$. If $I$ is a normal submonoid then $a\sim_I 0$ if and only if $a\in I$. 
 Furthermore if  $M$ and $I$ are equipped with a $\Gamma$-action then $M/\sim$ is a $\Gamma$-monoid. If $T$ is a \emph{$\Gamma$-refinement monoid} (i.e., a $\Gamma$-monoid which is refinement) and $I$ and $J$ are $\Gamma$-order ideals, one can check that $I+J$ is a $\Gamma$-order-ideal and $T/I$ is a $\Gamma$-refinement monoid.

One of the main result of this note (Theorem~\ref{mainthm}) shows that one can characterise certain type of graphs and their associated Leavitt path algebras with their talented monoids, if these monoids have certain composition series. For the next definition, recall that we use $\subset$ for a strict inclusion. Note that if $J$ is a $\Gamma$-order-ideal of the $\Gamma$-monoid $T$ and $I$ is a $\Gamma$-order-ideal of $J$, then $I$ is a $\Gamma$-order-ideal of $T$.

\end{deff}
\begin{deff}
Let $T$ be a $\Gamma$-monoid. We say that $T$ has a \emph{composition series} if there is a chain of $\Gamma$-order-ideals 
\[ 0=I_0 \subset I_1 \subset I_{2} \subset \dots \subset I_n=T,\] such that $I_{i+1}/I_{i}$, $0 \leq i \leq n-1$, are simple $\Gamma$-monoids. We say a composition series is of the \emph{cyclic (non-comparable, comparable) type} if all the simple quotients  $I_{i+1}/I_{i}$ are cyclic (non-comparable, comparable). We further say, a composition series is of \emph{mixed type} of certain kinds if the simple quotients are of those given kinds. We say two composition series of $T$ are \emph{equivalent} if there is a one-to-one correspondence between the simple quotients of the series such that the corresponding quotients are $\Gamma$-isomorphic monoids. 
\end{deff}

Since the sum of cyclic ideals is cyclic, any $\Gamma$-monoid $T$  has a largest cyclic ideal. We also need to consider the following sequence of ideals in $T$. 

\begin{deff}\label{gdhfyrhfu}
Let $T$ be a $\Gamma$-monoid. The \emph{upper cyclic series}  of $T$ is a chain of $\Gamma$-order-ideals 
\[ 0=I_0 \subset I_1 \subset I_{2} \subset \dots \subset I_n,\]
where $I_{i+1}/I_{i}$ is the largest cyclic ideal of $T/I_i$, $0 \leq i \leq n-1$. 
 We call $I_n$ the \emph{leading ideal} of the series and denote $n$ by $l_c(T)$. 
\end{deff}

As we will see in \S \ref{talmon}, for a graph $E$, the talented monoid $T_E$ has a non-zero cyclic ideal if and only if $E$ has a cycle with no exit (see Theorem~\ref{hfghfbggf}).  Thus $l_c(T_E)=0$ if and only if $E$ has no cycle without exit. Furthermore,  $l_c(T_E)=1$ if and only if $T_E$ is a multi-headed comet graph (Corollary~\ref{gatzia}).

In Theorem~\ref{mainthm} we will prove that  the talented monoid of a graph with disjoint cycles has a composition series of cyclic and non-comparable mixed type. The Jordan-H\"older Theorem for such monoids guarantees that any composition series of such graphs has the same mixed types.

The relations between quotient monoids and homomorphisms follow the same pattern as groups and rings. 
We need the following Lemmas which hold when $T$ is a $\Gamma$-refinement monoid.

\begin{lemma}\label{secondiso}
Let $T$ be a  $\Gamma$-refinement monoid and let $I$ and $J$ be $\Gamma$-order-ideals of $T$. Then we have a $\Gamma$-isomorphism 
$(I+J)/I \cong J/ I\cap J$. \end{lemma}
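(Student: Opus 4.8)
The plan is to establish the isomorphism by the standard second-isomorphism-theorem argument, recast for commutative monoids, where the role of a kernel is played by a congruence. First I would consider the $\Gamma$-module homomorphism $\varphi\colon J \to (I+J)/I$ obtained by restricting the quotient map $I+J \to (I+J)/I$ to the submonoid $J$; explicitly, $\varphi(x)$ is the class $[x]$ of $x$ in $(I+J)/I$. (Recall that $I+J$ is a $\Gamma$-order-ideal of $T$, since $T$ is a $\Gamma$-refinement monoid.) This $\varphi$ is a $\Gamma$-module homomorphism because both the inclusion $J \hookrightarrow I+J$ and the quotient map respect addition and the $\Gamma$-action. To see that $\varphi$ is surjective, note that a typical element of $(I+J)/I$ is the class of some $a+b$ with $a\in I$ and $b\in J$; since $(a+b)+0 = b+a$ with $a\in I$, we have $a+b \sim_{I} b$, so $[a+b]=[b]=\varphi(b)$.

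The heart of the proof is to identify the congruence induced by $\varphi$, namely to show that for $x,y\in J$ one has $\varphi(x)=\varphi(y)$ if and only if $x\sim_{I\cap J}y$. The reverse direction is immediate: if $x+i=y+j$ with $i,j\in I\cap J$, then $i,j\in I$ gives $x\sim_{I} y$, hence $[x]=[y]$. For the forward direction---the one place where the refinement hypothesis is essential---suppose $\varphi(x)=\varphi(y)$, so that $x+i=y+j$ for some $i,j\in I$. Applying refinement to this equation yields $e_1,e_2,e_3,e_4\in T$ with $x=e_1+e_2$, $i=e_3+e_4$, $y=e_1+e_3$ and $j=e_2+e_4$. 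Since $I$ and $J$ are order-ideals, the memberships $x=e_1+e_2\in J$, $i=e_3+e_4\in I$, $y=e_1+e_3\in J$ and $j=e_2+e_4\in I$ force $e_1,e_2\in J$, then $e_3,e_4\in I$, then $e_3\in J$, and finally $e_2\in I$, respectively. Hence $e_2,e_3\in I\cap J$, and the identity $x+e_3=e_1+e_2+e_3=y+e_2$ exhibits $x\sim_{I\cap J}y$.

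Finally I would invoke the first isomorphism theorem for commutative monoids: the surjective homomorphism $\varphi\colon J \to (I+J)/I$ factors through the quotient of $J$ by the congruence it induces, and by the previous paragraph that congruence is exactly $\sim_{I\cap J}$, so $\varphi$ descends to an isomorphism $J/(I\cap J)\cong (I+J)/I$. Since $I\cap J$ is a $\Gamma$-order-ideal (being the intersection of two such), the congruence $\sim_{I\cap J}$ is $\Gamma$-invariant, and $\varphi$ is $\Gamma$-equivariant, so the induced map is a $\Gamma$-isomorphism, as required. The only genuine obstacle is the forward direction of the congruence computation: there the refinement property is used to split the witnessing elements $i,j\in I$ into pieces that, thanks to the order-ideal condition, can be arranged to lie in $I\cap J$; without refinement the comparison elements need not be chosen inside $J$.
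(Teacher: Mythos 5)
Your proof is correct and is essentially the paper's own argument: the paper defines the inverse map $\theta\colon (I+J)/I \to J/(I\cap J)$, $[a+b]\mapsto [b]$, and its well-definedness check is exactly your congruence computation, with the same refinement decomposition $x=e_1+e_2$, $i=e_3+e_4$, $y=e_1+e_3$, $j=e_2+e_4$ and the same use of the order-ideal property to place $e_2,e_3\in I\cap J$. Packaging this as a surjection $J\to (I+J)/I$ whose induced congruence is identified with $\sim_{I\cap J}$, followed by the first isomorphism theorem, is just the mirror-image formulation of the same proof.
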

\begin{proof}
Define the map $\theta:(I+J)/I \rightarrow J/ I\cap J; [a+b]\mapsto [b]$, where $a\in I$ and $b\in J$. Once we show that this map is well-defined, it is then easy to see $\theta$ is an isomorphism of $\Gamma$-monoids. Since $[a+b]=[b]$ in $(I+J)/I$, it is then enough to show $b_1\sim_I b_2$ implies $b_1 \sim_{I\cap J} b_2$. From $b_1\sim_I b_2$, we obtain $b_1+i_1=b_2+i_2$, for $i_1,i_2\in I$. Since $T$ is refinement, we can write $b_1=e_1+e_2$, $b_2=e_1+e_3$, and $i_1=e_3+e_4$ and $i_2=e_2+e_4$, where $e_1,e_2,e_3,e_4 \in T$. Since $I$ and $J$ are $\Gamma$-order-ideals it follows that $e_1\in J$ and $e_2,e_3\in I\cap J$. It now follows that $b_1 \sim_{I\cap J} b_2$. 
\end{proof}

If $T$ is a $\Gamma$-monoid and $I$ an order-ideal of $T$, then there is a one-to-one inclusion-preserving correspondence between $\Gamma$-order-ideals of the quotient monoid $T/I$ and $\Gamma$-order-ideals of $T$ containing $I$. For $I\subseteq J$, the corresponding $\Gamma$-order-ideal in $T/I$ is denoted $J/I$. And, the so-called third isomorphism theorem also holds in this setting: If $I\subset J$ are $\Gamma$-order-ideals of $T$, then 
\begin{equation}\label{thirdiso}
(T/I) \big / (J/I) \cong T/J.
\end{equation}

The isomorphism theorems~\ref{secondiso} and~\ref{thirdiso}  coupled with the correspondence between ideals give the following lemma which will be used later. 
\begin{lemma}\label{compso}
Let $T$ be a $\Gamma$-refinement monoid and $I$ a $\Gamma$-order-ideal of $T$. Then $T$ has composition series if and only if $T/I$ and $I$ have composition series. 
\end{lemma}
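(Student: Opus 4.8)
The plan is to prove the two implications of Lemma~\ref{compso} separately, in each case using the correspondence between $\Gamma$-order-ideals of $T/I$ and those of $T$ containing $I$, together with the second and third isomorphism theorems recalled above. The easier direction is ``$T/I$ and $I$ have composition series $\Rightarrow$ $T$ has one''. Here I would take a composition series $0=J_0\subset J_1\subset\cdots\subset J_m=I$ of $I$ and a composition series $0=\overline{K}_0\subset \overline{K}_1\subset\cdots\subset \overline{K}_n=T/I$ of $T/I$. By the ideal-correspondence, each $\overline{K}_j$ lifts to a unique $\Gamma$-order-ideal $K_j$ of $T$ with $I\subseteq K_j$ and $K_j/I=\overline{K}_j$; since the correspondence is inclusion-preserving we get $I=K_0\subset K_1\subset\cdots\subset K_n=T$. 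Concatenating gives the chain
\[
0=J_0\subset J_1\subset\cdots\subset J_m=I=K_0\subset K_1\subset\cdots\subset K_n=T.
\]
The quotients $J_{i+1}/J_i$ are simple by hypothesis, and by the third isomorphism theorem~\eqref{thirdiso} we have $K_{j+1}/K_j\cong (K_{j+1}/I)\big/(K_j/I)=\overline{K}_{j+1}/\overline{K}_j$, which is simple by hypothesis. Hence the concatenated chain is a composition series for $T$.

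For the converse, suppose $T$ has a composition series $0=L_0\subset L_1\subset\cdots\subset L_r=T$. To produce a composition series for $I$, I would intersect with $I$ and form the chain $L_0\cap I\subseteq L_1\cap I\subseteq\cdots\subseteq L_r\cap I=I$ (note each $L_k\cap I$ is a $\Gamma$-order-ideal since $T$ is refinement). The consecutive subquotients satisfy, by the second isomorphism theorem~(Lemma~\ref{secondiso}),
\[
(L_{k+1}\cap I)\big/(L_k\cap I)\cong \big((L_{k+1}\cap I)+L_k\big)\big/L_k,
\]
and the right-hand side is a $\Gamma$-order-ideal of the simple monoid $L_{k+1}/L_k$, hence is either $0$ or all of $L_{k+1}/L_k$. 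Discarding the repeated terms (those with trivial subquotient) leaves a genuine composition series of $I$ whose nonzero subquotients are among the simple quotients of the original series. Symmetrically, to build a composition series for $T/I$ I would push the $L_k$ forward, taking $(L_k+I)/I$, a chain of $\Gamma$-order-ideals from $0$ to $T/I$; its subquotients are
\[
\big((L_{k+1}+I)/I\big)\big/\big((L_k+I)/I\big)\cong (L_{k+1}+I)/(L_k+I),
\]
again by the third isomorphism theorem, and one checks via Lemma~\ref{secondiso} that $(L_{k+1}+I)/(L_k+I)$ is a quotient of the simple monoid $L_{k+1}/L_k$, so is $0$ or simple. Deleting repetitions yields a composition series for $T/I$.

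The technical heart — and the step I expect to need the most care — is justifying that the subquotients in the intersected and pushed-forward chains are \emph{either zero or simple}, since this is exactly where the refinement hypothesis and the isomorphism theorems interact. The cleanest route is to observe that simplicity of $L_{k+1}/L_k$ means it has no proper nonzero $\Gamma$-order-ideals, and then to identify each subquotient with a $\Gamma$-order-ideal (respectively a quotient) of $L_{k+1}/L_k$ using Lemma~\ref{secondiso} and~\eqref{thirdiso}; the refinement property is needed to guarantee that sums and intersections of $\Gamma$-order-ideals are again $\Gamma$-order-ideals and that the relevant quotients are well-behaved. A minor but genuine bookkeeping point is that collapsing the repeated terms does not disturb the strict-inclusion requirement in the definition of a composition series, which follows because each surviving subquotient is nonzero hence the inclusions are strict. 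Assembling these observations gives both implications and completes the proof.
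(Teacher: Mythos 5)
Your proof is correct and takes exactly the route the paper intends: the paper states Lemma~\ref{compso} without a written proof, presenting it as a direct consequence of the isomorphism theorems (Lemma~\ref{secondiso} and~(\ref{thirdiso})) together with the inclusion-preserving correspondence between $\Gamma$-order-ideals of $T/I$ and those of $T$ containing $I$. Your argument---concatenating a lifted series of $T/I$ with a series of $I$ for one direction, and intersecting with $I$ respectively pushing forward along $T\to T/I$ (then discarding zero subquotients) for the converse---is precisely the standard filling-in of that outline.
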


The next lemma will be used in Section~\ref{sec4}, to obtain composition series for graphs with disjoint cycles. 

\begin{lemma}\label{rtfdm}
Let $I_1,I_2, \dots, I_k$ be distinct minimal $\Gamma$-order-ideals of a $\Gamma$-refinement monoid $T$. Then 
\[ 0 \subset I_1 \subset I_1+ I_2 \subset \dots \subset I_1+I_2+\dots +I_k\] is a composition series for the monoid $I_1+I_2+\dots +I_k$. 
\end{lemma}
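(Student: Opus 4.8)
The plan is to prove that the chain
\[ 0 \subset I_1 \subset I_1+I_2 \subset \dots \subset I_1+I_2+\dots+I_k \]
is a composition series by showing that each successive quotient
\[ \big(I_1+\dots+I_{j}\big)\big/\big(I_1+\dots+I_{j-1}\big) \]
is a simple $\Gamma$-monoid. First I would verify that each partial sum $I_1+\dots+I_j$ is genuinely a $\Gamma$-order-ideal of $T$: this follows from the remark in the excerpt that in a $\Gamma$-refinement monoid the sum of two $\Gamma$-order-ideals is again a $\Gamma$-order-ideal, applied inductively. The strictness of each inclusion $I_1+\dots+I_{j-1} \subset I_1+\dots+I_j$ needs the $I_j$ to be \emph{distinct minimal} ideals; I would argue that if $I_j \subseteq I_1+\dots+I_{j-1}$, then minimality of $I_j$ would force $I_j$ to coincide with one of the earlier minimal ideals (or be contained in their join in a way that contradicts minimality), so distinctness prevents collapse. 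The precise bookkeeping here—ruling out $I_j$ sitting inside a sum of the others—is where I expect a small but genuine obstacle, since minimality is a statement about single ideals, not about sums.

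The heart of the proof is computing the successive quotients, and for this I would invoke the second isomorphism theorem (Lemma~\ref{secondiso}) with $I = I_1+\dots+I_{j-1}$ and $J = I_j$. That lemma gives the $\Gamma$-isomorphism
\[ \big((I_1+\dots+I_{j-1})+I_j\big)\big/(I_1+\dots+I_{j-1}) \;\cong\; I_j\big/\big((I_1+\dots+I_{j-1})\cap I_j\big). \]
So each quotient in the series is $\Gamma$-isomorphic to a quotient of the minimal ideal $I_j$ by a $\Gamma$-order-ideal of $I_j$. Since $I_j$ is minimal, it is a simple $\Gamma$-monoid (its only proper $\Gamma$-order-ideal is $0$), and hence the intersection $(I_1+\dots+I_{j-1})\cap I_j$ is either $0$ or all of $I_j$. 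The latter is excluded by the strictness argument above, so the intersection is $0$ and the quotient is $\Gamma$-isomorphic to $I_j$ itself, which is simple.

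One point I would take care to justify is that $(I_1+\dots+I_{j-1})\cap I_j$ is indeed a $\Gamma$-order-ideal of $I_j$, which is immediate since the intersection of two $\Gamma$-order-ideals is a $\Gamma$-order-ideal, and that minimality of $I_j$ really does say every nonzero $\Gamma$-order-ideal of $I_j$ equals $I_j$—this is the translation of the minimality of $I_j$ as an element/ideal into the simplicity of $I_j$ viewed as a $\Gamma$-monoid in its own right. The main obstacle, as noted, is the strictness of the inclusions: I must ensure $(I_1+\dots+I_{j-1})\cap I_j \neq I_j$, equivalently $I_j \not\subseteq I_1+\dots+I_{j-1}$, using only distinctness and minimality. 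I would handle this by the dichotomy that minimality forces the intersection to be $0$ or $I_j$; if it were $I_j$ for some $j$, then $I_j$ would be absorbed into the previous sum, and a short induction on how minimal ideals interact (again via the simplicity dichotomy applied to intersections $I_a\cap I_b$ for $a\neq b$, which must be $0$) shows this would make $I_j$ equal to some earlier $I_a$, contradicting distinctness. Once strictness is secured, the chain has length $k$ with all simple quotients, so it is a composition series for $I_1+\dots+I_k$, completing the proof.
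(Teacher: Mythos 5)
Your proposal is correct and follows essentially the same route as the paper: both apply the second isomorphism theorem (Lemma~\ref{secondiso}) with $I=I_1+\dots+I_{j-1}$ and $J=I_j$, reduce everything to showing $I\cap J=0$, and conclude that each successive quotient is $\Gamma$-isomorphic to $I_j$, which is simple because a minimal $\Gamma$-order-ideal of $T$ has no nonzero proper $\Gamma$-order-ideals. You in fact supply more detail than the paper, which simply asserts $I\cap J=0$ and the properness of the chain; your dichotomy argument (each intersection is $0$ or $I_j$ by minimality, the latter ruled out by decomposing elements of the sum into summands that the order-ideal $I_j$ absorbs, landing them in the pairwise intersections $I_a\cap I_j=0$) is precisely the justification the paper leaves to the reader.
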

\begin{proof} 
Since $I_i$ are distinct $\Gamma$-order-ideals, it is easy to show that the chain is proper. 
For $1< i \leq k$, set $I=I_1+I_2+\dots +I_{i-1}$ and $J=I_i$. We have $I \cap J=0$. Lemma~\ref{secondiso} now gives
\begin{equation}\label{hfgbgdgd}
\frac{I_1+I_2+\dots +I_{i-1}+I_i}{ I_1+I_2+\dots +I_{i-1}} \cong \frac{I+J}{I}\cong J\cong I_i.
\end{equation}
 Thus~(\ref{hfgbgdgd}) is an isomorphism of monoids, implying for any $1< i \leq k$, the quotients are simple. This proves the Lemma. 
\end{proof}

Finally we describe the Jordan-H\"older theorem in the setting of $\Gamma$-monoids. Although, we will not be using this theorem in this paper, the Jordan-H\"older theorem guarantees that simple monoids associated to the talented monoid of a graph with disjoint cycles are unique. 

\begin{thm}[Jordan-H\"older Theorem] \label{jordan} Any two composition series of a $\Gamma$-refinement monoid $T$ are equivalent. Thus any $\Gamma$-refinement monoid having a composition series determines a unique list of simple $\Gamma$-monoids. 
\end{thm}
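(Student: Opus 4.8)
The plan is to follow the classical route to Jordan--H\"older through the Schreier refinement theorem, which in turn rests on a Zassenhaus (butterfly) lemma adapted to $\Gamma$-order-ideals. The engine of the whole argument is Lemma~\ref{secondiso}, the third isomorphism~\eqref{thirdiso}, the ideal correspondence, and one additional ingredient: a \emph{modular law} for $\Gamma$-order-ideals of a refinement monoid.

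First I would record the explicit form of the sum of two $\Gamma$-order-ideals $I,J$ of a $\Gamma$-refinement monoid $T$, namely $I+J=\{a+b\mid a\in I,\ b\in J\}$. This set is plainly a $\Gamma$-closed submonoid, and it is downward closed: if $x\leq a+b$ with $a\in I$, $b\in J$, then refinement splits $x=x_1+x_2$ with $x_1\leq a$ and $x_2\leq b$, forcing $x_1\in I$ and $x_2\in J$. From this description the modular law is immediate: if $I'\subseteq I$ and $J$ are $\Gamma$-order-ideals, then
\[
I\cap (I'+J)=I'+(I\cap J),
\]
because any element $i'+j\in I$ with $i'\in I'$, $j\in J$ has $j\in I$ (as $I$ is an order-ideal), whence $j\in I\cap J$.

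Next I would establish the Zassenhaus lemma: for $\Gamma$-order-ideals $I'\subseteq I$ and $J'\subseteq J$,
\[
\frac{I'+(I\cap J)}{I'+(I\cap J')}\;\cong\;\frac{I\cap J}{(I'\cap J)+(I\cap J')}\;\cong\;\frac{J'+(J\cap I)}{J'+(J\cap I')}.
\]
To get the first isomorphism I would apply Lemma~\ref{secondiso} to the $\Gamma$-order-ideals $P=I'+(I\cap J')$ and $Q=I\cap J$: here $P+Q=I'+(I\cap J)$ since $I\cap J'\subseteq I\cap J$, and the modular law simplifies $P\cap Q=(I'\cap J)+(I\cap J')$. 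The middle quotient is manifestly symmetric under interchanging $(I,I')$ with $(J,J')$, which yields the second isomorphism.

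With Zassenhaus in hand, Schreier refinement is routine: given two composition series $0=I_0\subset\cdots\subset I_n=T$ and $0=J_0\subset\cdots\subset J_m=T$, I would refine the first by inserting the chain $I_k+(I_{k+1}\cap J_l)$, $l=0,\dots,m$, between $I_k$ and $I_{k+1}$, and symmetrically refine the second; the Zassenhaus isomorphisms pair the factors of the two refinements into $\Gamma$-isomorphic couples. Finally, since each factor $I_{i+1}/I_i$ of a composition series is a \emph{simple} $\Gamma$-monoid, it admits no proper refinement, so every inserted term merely repeats an endpoint and contributes a zero factor. Discarding the zero factors, the Zassenhaus pairing restricts to a bijection between the simple factors of the two original series under which matched factors are $\Gamma$-isomorphic, which is exactly the asserted equivalence; the uniqueness of the list of simple $\Gamma$-monoids follows at once. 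The main obstacle is the Zassenhaus lemma, and within it the modular law: the point to verify carefully is that the refinement property furnishes the cancellation-free substitute for the subtractions used in the group-theoretic butterfly argument; once this is secured, the refinement bookkeeping carries over verbatim.
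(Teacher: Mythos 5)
Your proposal is correct, but it follows a genuinely different route from the paper's. The paper gives no direct argument: having set up Lemma~\ref{secondiso}, the ideal correspondence and the third isomorphism~\eqref{thirdiso}, it defers to \cite{alfi}, where Jordan--H\"older for $\Gamma$-monoids is obtained by adapting Baumslag's short proof from group theory --- an induction on the length of a composition series that uses the second isomorphism theorem directly and never needs a refinement theorem or a butterfly lemma. You instead build the full Schreier--Zassenhaus machinery: the explicit description of $I+J$ (where refinement is exactly what makes the sum downward closed), a modular law for $\Gamma$-order-ideals (which, as your own argument shows, needs only the order-ideal property and not refinement), the butterfly lemma via Lemma~\ref{secondiso} applied to $P=I'+(I\cap J')$ and $Q=I\cap J$, and finally Schreier refinement, finishing with the observation that simplicity of the factors plus the ideal correspondence forces every inserted term to equal an endpoint. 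One point worth making explicit in that last step: since $\Gamma$-order-ideals are normal submonoids, a trivial quotient $P'/P=0$ forces $P'=P$, so zero factors correspond exactly to repeated terms; this is what legitimises discarding them and makes the counting in the final bijection work. Your route is longer but proves strictly more --- Schreier refinement holds for arbitrary finite chains of $\Gamma$-order-ideals, not only composition series --- whereas Baumslag's route, the one the paper points to, is shorter and stays entirely inside the toolkit the paper has already assembled.
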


Since we have all the isomorphism theorems in the setting of monoids, the proof of Theorem~\ref{jordan} is analogous of the one in group theory. We refer the reader to \cite{alfi}, where this theorem for $\Gamma$-monoids is established adopting Baumslag's short proof in the group setting.

\section{The talented monoid of a graph}\label{sec3}

\subsection{Talented monoid $T_E$ of a directed graph $E$}\label{talmon}

In this section we define the graph monoids that are the main interests of this paper.  Given a row-finite graph $E$, we denote by $F_E$ the free commutative monoid generated by $E^0$.

\begin{deff}\label{def:graphmonoid}
    Let $E$ be a row-finite graph. The \emph{graph monoid} of $E$, denoted $M_E$, is the commutative 
monoid generated by $\{v \mid v\in E^0\}$, subject to
\[v=\sum_{e\in s^{-1}(v)}r(e),\]
for every $v\in E^0$ that is not a sink.
\end{deff}

The relations defining $M_E$ can be described more concretely: First, define a relation $\rightarrow_1$ on $F_E$ as follows: for $\sum_{i=1}^n v_i  \in F_E$, and a regular vertex $v_j\in E^0$, where $1\leq j \leq n$,  
\[\sum_{i=1}^n v_i \rightarrow_1 \sum_{i\not = j }^n v_i+  \sum_{e\in s^{-1}(v_j)}r(e).\]
Then $M_E$ is the quotient of $F_E$ by the congruence generated by $\rightarrow_1$.

The following lemma is essential to the remainder of this paper, as it allows us to translate the relations in the definition of $M_E$ in terms of the simpler relation $\rightarrow$ in $F_E$. Here $\rightarrow $ is the transitive and reflexive closure of $\rightarrow_{1}$, namely $a \rightarrow b$ if there is a finite sequence $a=a_0 \rightarrow_{1} a_1 \rightarrow_{1} \dots \rightarrow_{1} a_n=b$. 

\begin{lemma}[{\cite[Lemmas 4.2 and 4.3]{ara2006}}]\label{confuu}
    Let $E$ be a row-finite graph.
    \begin{enumerate}
        \item (The Confluence Lemma) If $a,b\in F_E\setminus\left\{0\right\}$, then $a=b$ in $M_E$ if and only if there exists $c\in F_E$ such that $a\rightarrow c$ and $b\rightarrow c$. (Note that, in this case, $a=b=c$ in $M_E$.)
        
        \item If $a=a_1+a_2$ and $a\rightarrow b$ in $F_E$, then there exist $b_1,b_2\in F_E$ such that $b=b_1+b_2$, $a_1\rightarrow b_1$ and $a_2\rightarrow b_2$.
    \end{enumerate}
\end{lemma}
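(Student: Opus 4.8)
The plan is to treat $\rightarrow_1$ as a rewriting relation on the free commutative monoid $F_E$, whose elements we view as finite multisets of vertices. A single application of $\rightarrow_1$ replaces just one occurrence of a regular vertex, so it is compatible with addition (if $a\rightarrow_1 b$ then $a+c\rightarrow_1 b+c$, since a regular vertex of $a$ stays regular in $a+c$); hence the equivalence relation generated by $\rightarrow_1$ is automatically a congruence, and it is exactly the congruence defining $M_E$. Thus ``$a=b$ in $M_E$'' means precisely that $a$ and $b$ are joined by a finite zig-zag of $\rightarrow_1$-steps, and part~(1) is the Church--Rosser property of $\rightarrow_1$.

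I would prove part~(2) first, by induction on the number of steps in the derivation $a\rightarrow b$. The base case is trivial. For the inductive step, write $a\rightarrow b'\rightarrow_1 b$, apply the inductive hypothesis to get $b'=b'_1+b'_2$ with $a_1\rightarrow b'_1$ and $a_2\rightarrow b'_2$, and note that the final step $b'\rightarrow_1 b$ rewrites one occurrence of some regular vertex $v$. The multiplicity of $v$ in $b'$ is the sum of its multiplicities in $b'_1$ and $b'_2$, so $v$ occurs in at least one of them, say $b'_1$. Performing the rewrite there gives $b'_1\rightarrow_1 b_1$; setting $b_2=b'_2$ yields $b=b_1+b_2$ (both sides have equal multiplicities), together with $a_1\rightarrow b_1$ and $a_2\rightarrow b_2$.

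For part~(1) the ``if'' direction is immediate, since each $\rightarrow_1$-step is a defining relation of $M_E$, so $a\rightarrow c$ and $b\rightarrow c$ force $a=c=b$. The real content is confluence, and its key ingredient is a one-step diamond: given two single steps $a\rightarrow_1 b$ and $a\rightarrow_1 b'$ rewriting regular vertices $v$ and $w$, either $v=w$, in which case $b=b'$ because in a \emph{commutative} monoid distinct occurrences of the same generator are indistinguishable and the outcome depends only on multiplicities; or $v\neq w$, in which case the occurrence of $w$ (respectively $v$) survives untouched in $b$ (respectively $b'$), so rewriting it yields, in one step on each side, the common element obtained from $a$ by replacing the chosen $v$ and $w$ with $\sum_{e\in s^{-1}(v)}r(e)$ and $\sum_{e\in s^{-1}(w)}r(e)$. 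Hence the reflexive closure $\rho:=\rightarrow_1\cup\{(x,x)\mid x\in F_E\}$ has the diamond property, which propagates to its transitive closure $\rho^{*}=\rightarrow$ by the standard tiling induction on the two derivation lengths; thus $\rightarrow$ is confluent, confluence upgrades to Church--Rosser, and this produces the common descendant $c$ whenever $a=b$ in $M_E$. Finally, a regular vertex emits at least one edge, so $\rightarrow_1$ carries nonzero elements to nonzero elements, which is why restricting to $a,b\in F_E\setminus\{0\}$ keeps $c$ nonzero and justifies the remark $a=b=c$.

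The step I expect to be the main obstacle is confluence itself, and the point to emphasise is that the system is \emph{not} terminating: a cycle can be rewritten indefinitely, so Newman's lemma (local confluence plus termination) is unavailable. One must therefore establish the stronger one-step diamond above, joining two single steps by \emph{single} steps, and rely on the fact that, unlike bare local confluence, the diamond property is inherited by the reflexive--transitive closure. The remaining care is the bookkeeping of multiplicities in the free commutative monoid, which is exactly what makes the $v=w$ case collapse to $b=b'$ and the $v\neq w$ case commute cleanly.
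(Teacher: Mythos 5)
Your proof is correct, but there is no proof in the paper to compare it with: the paper imports Lemma~\ref{confuu} wholesale from Ara--Moreno--Pardo \cite{ara2006} (Lemmas 4.2 and 4.3 there) and never reproves it. Measured against that source, your argument is essentially the standard one: part (2) by induction on the length of the derivation, splitting multiplicities across the two summands, and part (1) by joining single-step divergences. The two points of care you flag are exactly the right ones, and both are handled correctly: since $\rightarrow_1$ is compatible with addition, the equivalence relation it generates is already the congruence defining $M_E$, so equality in $M_E$ is a finite zig-zag of $\rightarrow_1$-steps; and because the system is non-terminating (a cycle can be rewritten forever), Newman's lemma is unavailable, so you rightly establish the one-step diamond property for the reflexive closure of $\rightarrow_1$ --- rewriting occurrences of two distinct vertices commutes, while rewriting the same vertex twice gives equal results since occurrences of a generator in a free commutative monoid are indistinguishable --- and then invoke the standard facts that the diamond property passes to the reflexive-transitive closure and that confluence yields the Church--Rosser property. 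Your argument would serve as a correct, self-contained replacement for the citation.
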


The talented monoid of a graph $E$ is the graph monoid of the covering graph of $E$.  The \emph{covering graph} of $E$ is the graph $\overline{E}$ with vertex set $\overline{E}^0=E^0\times\mathbb{Z}$, and edge set $\overline{E}^1=E^1\times\mathbb{Z}$. The range and source maps are given as
\[s(e,i)=(s(e),i),\qquad r(e,i)=(r(e),i+1).\] Note that the graph monoid $M_{\overline E}$ has a natural $\mathbb Z$-action by ${}^n (v,i)= (v,i+n)$. One can directly define the talented monoid of the graph without invoking the notion of the covering graph.

\begin{deff}\label{talentedmon}
Let $E$ be a row-finite directed graph. The \emph{talented monoid} of $E$, denoted $T_E$, is the commutative 
monoid generated by $\{v(i) \mid v\in E^0, i\in \mathbb Z\}$, subject to
\begin{equation}\label{transgfrt}
v(i)=\sum_{e\in s^{-1}(v)}r(e)(i+1),
\end{equation}
for every $i \in \mathbb Z$ and every $v\in E^{0}$ that is not a sink. The group $\mathbb{Z}$ of integers acts on $T_E$ via monoid automorphisms by shifting indices: For each $n,i\in\mathbb{Z}$ and $v\in E^0$, define ${}^n v(i)=v(i+n)$, which extends to an action of $\mathbb{Z}$ on $T_E$. Throughout we will denote elements $v(0)$ in $T_E$ by $v$. 
\end{deff}

For a graph $E$, the graph monoid $M_E$ is conical and refinement~\cite{ara2006}, whereas the talented monoid $T_E$ is conical, cancellative and refinement~\cite{arali}. Clearly there is a $\mathbb Z$-monoid isomorphism $T_E \rightarrow M_{\overline E}; \, v(i) \mapsto (v,i)$. Thus one can use Confluence Lemma \ref{confuu} in the setting of talented monoids (by passing the elements to $M_{\overline E}$). Therefore if $a=b$ in $T_E$, then there is a $c\in F_{\overline E}$ such that $a\rightarrow c$ and $b\rightarrow c$.

\begin{example}\label{perioddel}
Let $E$ be a graph which has a cycle $C$ with no exit. Write $C=e_1e_2\dots e_n$, where $e_i\in E^1$ are edges on the cycle $C$ and $v:=s(e_1)=r(e_n)$. Since in $C$ there are no edges except $e_i's$, by~(\ref{transgfrt}), $v=r(e_1)(1)=r(e_2)(2)=\dots=r(e_n)(n)$. This shows $v=v(n)$. Writing in the form of ${}^n v=v$, we have that $v$ is a periodic element.  

If the graph $C$ has an exit, then starting from $v=s(e_1)$ and applying the relation~(\ref{transgfrt}) along the line $e_1e_2\dots e_n$, we obtain extra vertices corresponding to exit edges in $C$, i.e., $v={}^n v+ z$, where $z\in T_E$. Thus ${}^n v < v$. Infact, a graph has a cycle without [with] exit if there is a vertex 
$v\in T_E$ such that ${}^nv=v$ [${}^n v < v$] (see~\cite{hazli}).  
\end{example}

Throughout the article, we simultaneously use $v\in E^0$ as a vertex of the graph $E$, as an element of the algebra $L_\K(E)$ and the element $v=v(0)$ in the monoid $T_E$, as the meaning will be clear from the context. For a subset $H\subseteq E^0$, the ideal it generates in $L_\K(E)$ is denoted by $I(H)$, whereas the $\mathbb Z$-order-ideal it generates in $T_E$ is denoted by $\langle H \rangle $.


For the next statement, recall the setting of Lemma~\ref{gdeppok} and the correspondence~(\ref{gfhfgdgdh3})

\begin{lemma} \label{qmiso} Let $E$ be a row-finite graph and $H\subseteq K$ be hereditary saturated subsets of $E^0$.  There is a natural inclusion $i:T_H \rightarrow T_E$ with  $i(T_H)=\langle H \rangle$. Furthermore, there is a natural isomorphism of $\mathbb Z$-monoids 
\begin{align}\label{henhgdfgf}
  T_{K/H} &\longrightarrow T_{K}/T_H,\\
v&\longmapsto v.  \notag
\end{align}
Consequently there is a one-to-one correspondence between ideals of $T_E/ \langle H  \rangle$ and $T_{E/H}$ via 
\[  \langle K \rangle / \langle H \rangle \cong  T_K/T_H \cong T_{K/H} \cong    \langle K/H  \rangle.\]
\end{lemma}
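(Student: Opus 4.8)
The plan is to establish the two displayed $\mathbb Z$-monoid isomorphisms first, and then read off the ideal correspondence as a formal consequence. For the inclusion, I would define $i\colon T_H\to T_E$ on generators by $v(j)\mapsto v(j)$ for $v\in H$. Because $H$ is hereditary, every edge emitted by an $H$-vertex ranges in $H$, so $s_E^{-1}(v)=s_H^{-1}(v)$ for $v\in H$; hence the defining relations \eqref{transgfrt} of $T_H$ are literally among those of $T_E$ and $i$ is a well-defined $\mathbb Z$-module homomorphism. For injectivity I would pass to $M_{\overline E}$ and invoke the Confluence Lemma~\ref{confuu}: if $\tilde a,\tilde b\in F_{\overline H}$ become equal in $T_E$, there is $c$ with $\tilde a\to c$ and $\tilde b\to c$ in $F_{\overline E}$; since $\overline H$ is hereditary these reduction sequences never leave $F_{\overline H}$ and are valid reductions inside $\overline H$, so $\tilde a=\tilde b$ already in $T_H$. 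For the image I would check that $i(T_H)$ is a $\mathbb Z$-order-ideal: it visibly contains $H$ and is $\mathbb Z$-closed, and if $a+b=c$ in $T_E$ with $c\in i(T_H)$, a confluence yields $d$ with $a+b\to d$ and $c\to d$, where heredity keeps $d$ supported on $H$; part~(2) of Lemma~\ref{confuu} then splits $a+b\to d$ as $a\to d_1$, $b\to d_2$ with $d=d_1+d_2$, and $d_1,d_2$ are supported on $H$, so $a=d_1$ and $b=d_2$ lie in $i(T_H)$. Being a $\mathbb Z$-order-ideal containing $H$ it contains $\langle H\rangle$, and the reverse inclusion is immediate, so $i(T_H)=\langle H\rangle$.

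Since $H\subseteq K$ are both hereditary and saturated and $H$ remains hereditary and saturated inside the subgraph $K$, it suffices to prove $T_{E/H}\cong T_E/T_H$ and then apply this to the graph $K$ to obtain $T_{K/H}\cong T_K/T_H$. I would define $\theta\colon T_{E/H}\to T_E/T_H$ by $v\mapsto[v]$ on the generators $v\in(E/H)^0=E^0\setminus H^0$. Well-definedness is exactly where saturation enters: a vertex $v\notin H$ that is regular in $E$ cannot have all its emitted edges ranging into $H$, for otherwise saturation would force $v\in H$; hence $v$ is regular in $E/H$ with $s_{E/H}^{-1}(v)=\{e\in s_E^{-1}(v):r(e)\notin H\}$, and modulo $T_H$ the relation \eqref{transgfrt} of $T_E$ collapses precisely to the corresponding relation of $T_{E/H}$, because each term $r(e)(i+1)$ with $r(e)\in H$ lies in $T_H$ and becomes $0$. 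Surjectivity is clear, as $[v]=0$ for $v\in H$ and $[v]=\theta(v)$ otherwise.

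The main obstacle is the injectivity of $\theta$, namely that the congruence $\sim_{T_H}$ on $T_E$ collapses to equality in $T_{E/H}$ and nothing coarser. The cleanest route is through covering graphs: since $T_E=M_{\overline E}$, and one checks directly that $\overline{E/H}=\overline E/\overline H$ with $\overline H=H\times\mathbb Z$ hereditary and saturated in $\overline E$, the graded statement reduces to the (standard) ungraded graph-monoid quotient $M_{\overline E}/M_{\overline H}\cong M_{\overline E/\overline H}$, and the shift action commutes with this isomorphism so that it is $\mathbb Z$-equivariant. Alternatively one argues injectivity by hand: given $a\sim_{T_H}b$, choose $p,q\in F_{\overline H}$ with $a+p=b+q$ in $T_E$, take a common confluent $d$, and project each reduction step to $E/H$ by discarding the terms that fall into $H$; heredity guarantees discarded terms never re-enter, so $a$ and $b$ reduce to a common element in $\overline{E/H}$ and hence $a=b$ in $T_{E/H}$. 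Finally the displayed chain is assembled formally: by the first paragraph $\langle K\rangle=T_K$ and $\langle H\rangle=T_H$ as $\mathbb Z$-order-ideals of $T_E$, so $\langle K\rangle/\langle H\rangle=T_K/T_H$; the second paragraph gives $T_K/T_H\cong T_{K/H}$; and the first paragraph applied to the graph $E/H$ with its hereditary saturated subset $K/H$ (Lemma~\ref{gdeppok}) gives $T_{K/H}\cong\langle K/H\rangle$. Taking $K=E$ yields the $\mathbb Z$-isomorphism $T_E/\langle H\rangle\cong T_{E/H}$, under which the inclusion-preserving lattice correspondence between $\Gamma$-order-ideals of a quotient monoid and those of the ambient monoid containing $\langle H\rangle$ produces the asserted one-to-one correspondence of ideals.
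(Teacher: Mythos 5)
Your proposal is correct. For the first half it coincides with the paper's own argument: the same generator-level map $i$, well-definedness from heredity, and injectivity by passing to $M_{\overline E}$ and using the Confluence Lemma~\ref{confuu} together with the observation that reductions of elements supported on $\overline H$ never leave $\overline H$. You in fact do slightly more than the paper here, since you verify $i(T_H)=\langle H\rangle$ (the order-ideal property via Lemma~\ref{confuu}(2)), a claim the printed proof states but does not check. Where you genuinely diverge is the isomorphism $T_{K/H}\cong T_K/T_H$: the paper never proves injectivity of $\phi\colon v\mapsto v$ directly, but instead constructs the explicit inverse $\psi\colon T_K\to T_{K/H}$ (sending $v\mapsto v$ for $v\notin H$ and $v\mapsto 0$ for $v\in H$), notes that it kills $T_H$ and hence factors through the quotient, and concludes from $\psi\phi=\id$ and $\phi\psi=\id$ on generators. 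You instead attack injectivity of the forward map head-on, either by reducing to the ungraded quotient isomorphism $M_{\overline E}/M_{\overline H}\cong M_{\overline E/\overline H}$ for the covering graphs (a result that is indeed available in \cite{ara2006}, so the citation is legitimate, after your easy check that $\overline{E/H}=\overline E/\overline H$ and that $\overline H$ is hereditary and saturated), or by the hands-on argument that projects a confluence in $F_{\overline E}$ to $F_{\overline{E/H}}$ by discarding $H$-supported terms. Both routes are sound: the paper's inverse-map device is shorter and avoids any further confluence bookkeeping, while your projection argument makes explicit precisely the verification the paper dismisses as ``routine,'' and the covering-graph reduction has the merit of deriving the graded statement formally from the known ungraded one. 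One wording slip worth fixing: in the projection step you say ``heredity guarantees discarded terms never re-enter,'' but saturation is also needed at that point, to guarantee that a reduction performed at a vertex outside $H$ projects to a legitimate reduction in $E/H$ (otherwise a regular vertex of $E$ all of whose edges land in $H$ would be a sink in $E/H$, yet its reduction would project to the empty sum). Since you established exactly this consequence of saturation earlier in the same paragraph, this is a presentational issue rather than a gap.
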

\begin{proof}
Consider the map $i:T_H \rightarrow T_E$ defined on generators by $v\mapsto v$. Since $H$ is hereditary and saturated, the edges emitting from $v$ in $H$ and $E$ are the same, and thus the map $i$ is indeed well-defined. If $i(a)=i(b)$ for $a,b\in T_H$, then an application of Confluence Lemma~\ref{confuu} shows that there is an $c\in T_E$ such that $a=b=c$ in $T_E$. However, since $a,b\in T_H$, all the vertices appearing in the presentations of $a$ and $b$ are in $H$. Thus any transformation on $a$ and $b$ in Confluence Lemma gives  an element in $T_H$ again. Thus $c\in T_H$ and  $i$ is injective. 

To establish~(\ref{henhgdfgf}), define $\phi:T_{K/H} \rightarrow T_K/T_H;v \mapsto v$. This map is well-defined. For consider  the defining relations 
\begin{equation}\label{gfmohtag}
v=\sum_{\{e\in s^{-1}(v)\mid r(e)\not \in H\}} r(e)
\end{equation}
in $T_{K/H}$. In $T_K$, we  have $v=\sum_{\{e\in s^{-1}(v)\mid r(e)\not \in H\}} r(e)+ \sum_{\{e\in s^{-1}(v)\mid r(e) \in H\}} r(e)$. Since the last sum is in $T_H$, we conclude that  (\ref{gfmohtag}) holds in the monoid $T_K/T_H$. 

Now define a map $\psi: T_K \rightarrow T_{K/H}$ on generators by $\psi(v)=v$ if $v\in K\backslash H$ and zero otherwise.  It is routine as above to show this is a well-defined map which factors through $T_H$ and thus induces the homomorphism $\psi:T_K/T_H\rightarrow T_{K/H}$. Since the maps are identity on the generators, $\psi\phi=\id$ and $\phi\psi=\id$, indicating the maps are isomorphism of $\mathbb Z$-monoids. The rest of the lemma follows similarly. 
\end{proof}

We will be using the following Lemma showing a vertex $w\in T_E$ is periodic if and only if it can be represented as a sum of vertices (with shifts) on cycles with no exits. A variant  of the proof given here was used in \cite{hazli,lia}, but we provide a proof for the convenient for the reader.

 \begin{thm} \label{comemaineme} 
Let $E$ be a row-finite directed graph and $T_E$ its associated talented monoid. Then  a vertex $w \in T_E$ is periodic if and only if $w=\sum {}^{i_k} v_k$, where $i_k\in \mathbb Z$ and $v_k$ are vertices on cycles with no exits. 
\end{thm}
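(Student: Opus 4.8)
The plan is to prove the two implications separately, disposing of the easy direction first and then isolating the real content in the converse.

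For the backward direction, suppose $w=\sum_k {}^{i_k}v_k$ with each $v_k$ lying on a cycle $C_k$ with no exit, of length $n_k$. By Example~\ref{perioddel} we have ${}^{n_k}v_k=v_k$, and since $\mathbb Z$ is abelian the shifted generator satisfies ${}^{n_k}({}^{i_k}v_k)={}^{i_k}({}^{n_k}v_k)={}^{i_k}v_k$. Taking $N$ to be a common multiple of the finitely many lengths $n_k$, every summand is fixed by ${}^{N}$, whence ${}^{N}w=\sum_k {}^{N}({}^{i_k}v_k)=\sum_k {}^{i_k}v_k=w$, so $w$ is periodic. This uses nothing beyond the defining relation of $T_E$ recorded in Example~\ref{perioddel}.

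For the forward direction, assume ${}^{n}w=w$ with $n>0$ (replace $n$ by $-n$ if necessary, since ${}^{n}w=w$ forces ${}^{-n}w=w$). Passing to $M_{\overline E}$ and using the Confluence Lemma~\ref{confuu}, I would first show that \emph{no sink is reachable from $w$}. If a sink $z$ sat at distance $d$ from $w$, then rewriting $w$ would produce a reduct containing the term $z(d)$, which is an atom that can never be rewritten again; but the copies $z(i)$ of a sink are pairwise non-comparable atoms, so a periodic element cannot retain any $z(d)$ as a summand, as its sink-content would then be a nonzero shift-invariant finite sum of distinct atoms, which is impossible (here cancellativity of $T_E$ is what guarantees $z(d)$ is a genuine, non-absorbable summand). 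Consequently every path out of $w$ prolongs indefinitely and every reduct of $w$ is a sum of shifted \emph{regular} vertices, all sitting at a single, arbitrarily large level.

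The decisive step is then to show that the whole mass of $w$ is carried by no-exit cycles, i.e.\ that $w$ lies in the order-ideal $\langle P\rangle$ generated by the set $P$ of vertices on cycles with no exit. The idea is that, along the reduction, the shift ${}^{n}$ and the $n$-fold rewriting must eventually coincide: if a vertex carrying mass failed to lie on a no-exit cycle, i.e.\ either lay on no cycle or on a cycle with an exit, then the corresponding rewriting would, exactly as in Example~\ref{perioddel}, contribute an extra nonzero term, yielding $w={}^{m}w+z$ with $z\ne0$ for a suitable $m>0$, hence ${}^{m}w<w$; iterating this strict decrease along multiples of $m$ and $n$ contradicts the fact that $w$ is fixed by every multiple of $n$. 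Ruling this out forces the reduction of $w$ to agree with an iterated shift, so after finitely many steps $w$ is rewritten as $\sum_k r(p_k)(m_k)$ with every $r(p_k)$ on a no-exit cycle, which is the asserted decomposition. I expect this last step to be the main obstacle: making the ``no accumulation'' principle rigorous requires careful simultaneous bookkeeping of all branches of the reduction through Lemma~\ref{confuu}(2) together with cancellativity of $T_E$, and it is here that a variant of the argument of \cite{hazli,lia} is needed. Once $w\in\langle P\rangle$ is secured, Lemma~\ref{longpath} applied inside the hereditary and saturated closure of $P$ supplies the required finite reduction of $w$ into $P$, completing the proof.
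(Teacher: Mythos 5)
Your backward direction is correct and coincides with the paper's (periodicity of each summand via Example~\ref{perioddel}, then a common multiple of the cycle lengths). The problem is the forward direction: the step you yourself flag as ``the main obstacle'' is precisely the entire content of the theorem, and you do not prove it --- you defer it to ``a variant of the argument of \cite{hazli,lia}.'' Moreover, the sketch you give for it does not work as stated. You claim that if some vertex $u$ carrying mass in a reduct of $w$ fails to lie on a no-exit cycle, then rewriting yields $w={}^{m}w+z$ with $z\neq 0$. But if $u$ lies on \emph{no} cycle at all, rewriting $u(m_i)$ never reproduces a shifted copy of $u$ (nor of $w$), so no relation of the form $w={}^m w+z$ arises from it; and if $u$ lies on a cycle with an exit, Example~\ref{perioddel} only gives a strict decrease of the single summand $u(m_i)$ under a shift by that cycle's length --- to convert this into a \emph{global} strict inequality ${}^{m}w<w$ you would need the remaining summands to be fixed by the same shift, which is exactly what you are trying to establish. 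So the contradiction machine (${}^m w < w$ is incompatible with periodicity, via cancellativity) is fine, but you never legitimately feed it an input.

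What the paper does at this point, and what your proposal is missing, is a counting argument that closes the gap in one stroke. From ${}^{l}w=w$, the Confluence Lemma~\ref{confuu} gives a common reduct $c=u_1(m_1)+\dots+u_q(m_q)$ of $w$ and $w(l)$; since $\overline{E}$ is stationary, $w$ also reduces to the shifted element ${}^{-l}c$, and one obtains a reduction ${}^{-l}c \rightarrow c$ between two elements with the \emph{same number} of free generators. Because a rewriting step at a vertex with more than one outgoing edge strictly increases the number of generators, every vertex encountered along this reduction has exactly one outgoing edge --- no bifurcations. This produces a bijection $\rho$ from the shifted generator list onto the unshifted one; some power $\rho^{t}$ is the identity, so each $u_i$ admits a bifurcation-free path back to itself, i.e.\ $u_i$ lies on a cycle with no exit, and $w=c$ is the desired decomposition directly (your preliminary sink-exclusion step and the detour through $\langle P\rangle$ and Lemma~\ref{longpath} become unnecessary). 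Without this counting idea, or a rigorous substitute for it, your proof of the forward implication is incomplete.
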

\begin{proof}
Suppose $a_i\in T_E$ are periodic elements with  ${}^{p_i}a_i=a_i$, where $0\not= p_i\in \mathbb N$ and $1\leq i \leq n$. Then it is easy to see that ${}^p a=a$, where $a=\sum_{i=1}^n a_i$ and $p=p_1p_2\dots p_n$. Now suppose $w=\sum {}^{i_k} v_k$, where $v_k$ are on cycles without exits. Being on a cycle without exit, each of $v_k$ are periodic, 
thus $w$ is periodic. This gives one direction of the theorem. 

Suppose now a vertex $w\in T_E$ is periodic, i.e., there is an $0\not= l\in \mathbb N$ such that ${}^lw=w$. By the Confluence Lemma~\ref{confuu}, there is a $c\in F_{\overline E}$ such that  $w \rightarrow c$ and $w(l) \rightarrow c$. Write $c=u_1 (m_1)+\dots + u_q(m_q)$.   Since the graph $\overline E$ is a stationary, namely the graph repeats going from level $i$ to level $i+1$, and 
$w(l) \longrightarrow u_1(m_1)+\dots +u_q (m_q),$
then 
$w \longrightarrow u_1(m_1-l)+\dots +u_q (m_q-l)$. Consequently 
 \begin{equation}\label{subgdtgee1}
 u_1(m_1-l)+\dots + u_q (m_q-l) \longrightarrow u_1 (m_1)+\dots + u_q(m_q).
 \end{equation}

 Since the number of generators on the right and the left hand side of (\ref{subgdtgee1}) are the same and the relation $\rightarrow$ would only increase the number of generators if there is  more than one edge emitting from a vertex, it follows that there is only one edge emitting from each vertex in the list 
 $A=\{u_1(m_1-l), \dots , u_q (m_q-l) \}$ and their subsequent vertices until the edges reach the list $B=\{u_1(m_1), \dots , u_q(m_q)\}$. Thus we have a bijection $\rho: A\rightarrow B$. Consequently,  there is an $t\in \mathbb N$ such that $\rho^t=1$. Thus we have $u_i(m_i) \rightarrow u_i (m_i+tl)$ for all elements of $A$. Going back to the graph $E$, this means there is a path with no bifurcation from $u_i$ to itself, namely there is a cycle with no exit based at $u_i$. Since $w=c=u_1 (m_1)+\dots + u_q(m_q)$, the proof is complete. 
 \end{proof}
 
 We are now in a position to provide a proof for Lemma~\ref{hnhngff}.
\begin{proof}[Proof of Lemma \ref{hnhngff}]\label{jumpshere}
Consider the quotient graph $E/H$. Since $E$ has no sinks, by Lemma~\ref{helplemm}(1), $E/H$ has no sinks either. Furthermore, $E/H$ consists of disjoint cycles (otherwise $E$ has does have non-disjoint cycles).

Let $I=\langle H \rangle $ be the $\Gamma$-order-ideal of $T_E$ generated by vertices on the cycles with no exits. Note that $H=I\cap E^0$ is the hereditary and saturated subset generated by vertices on the cycles with no exits. By Lemma~\ref{longpath}, there is an $l$ such that any path of length $l$ starting at $v\in H$ ends at some cycle without exit (the generating set of $H$). Thus in $T_E$, starting from $v$ and applying the $l$ consecutive transformation~\ref{transgfrt}, we get that $v$ can be written as sum of vertices (with shifts) on cycles without exits.  By Theorem~\ref{comemaineme}, $v\in I$ is a periodic element. Since $I$ is generated by all these periodic elements,  $I$ is cyclic.

Consider the maximal chain of cycles $C_1\Rightarrow C_2\Rightarrow \cdots \Rightarrow C_{i-1} \Rightarrow C_{i}$ in $E$. If $i=1$, then $E$ is multi-headed comet and $E/H$ is an empty set. Otherwise, since $C_{i-1}$ is a cycle with an exit in $E$, vertices on $C_{i-1}$ are not cyclic, and thus they don't belong to the cyclic ideal $I$. Consequently they don't belong to $H$. Therefore the cycle $C_{i-1}$ is in the quotient graph $E/H$. An easy argument now shows that $C_1\Rightarrow C_2\Rightarrow \cdots \Rightarrow C_{i-1}$ has to be a maximal chain of cycles in $E/H$.  
\end{proof}

\begin{example}
Let $E$ be the graph. 
\begin{equation*} 
\xy 
/r3pc/: {\xypolygon7{~<<{}~>{}{\circlearrowright}}}
\endxy
\end{equation*}
\smallskip 

Then the talented monoid $T_E$ is cyclic (Theorem~\ref{comemaineme}). Furthermore, if $z_i$, $1\leq i \leq 7$ denote the vertices of the loops then the order-ideals $\langle z_i\rangle$ are cyclic and minimal and by Lemma~\ref{rtfdm},  $ 0 \subset  \langle z_1 \rangle \subset \langle z_1,z_2 \rangle \subset \dots \subset \langle z_1, \dots z_7 \rangle=T_E$ is a composition series of cyclic type for $T_E$. 

\end{example}

\begin{example}
Let $E$ be the graph. 
\[
\xymatrix{
&  & w \ar@(u,r) \\
t \ar@/^1pc/[r]& v \ar@/^1pc/[l] \ar[ur] \ar[dr] &   \\
& & z}
\]

\smallskip 
Then the talented monoid $T_E$ has composition series of cyclic and non-comparable mixed type 
$0 \subset \langle w \rangle \subset \langle w,z \rangle  \subset T_E$ and $0 \subset \langle z \rangle \subset \langle w,z \rangle  \subset T_E$.

\end{example}

\begin{example}
Let $E$ be the graph 
\[
 \xymatrix{
 v \ar@(ul,ur)  \ar@(u,r) 
 }
\]

Then the talented monoid $T_E$ has the composition series $0 \subset T_E$ which is of comparable type. 
\end{example}

Clearly, if $E$ is finite, then $T_E$ has a composition series. In Section~\ref{sec4} we will prove that a graph $E$ consists of disjoint cycles if and only if a composition series of $T_E$ is of cyclic and non-comparable type (Theorem~\ref{mainthm}). In order to do this, we need two theorems. 

\begin{thm}\label{hfghfbggf}
Let $E$ be a finite graph. There is a one-to-one correspondence between cycles with no exits in $E$ and 
the cyclic minimal ideals of $T_E$. Furthermore, the sum of all cyclic minimal ideals, which is the ideal generated by vertices on cycles with no exits, is the largest cyclic ideal of $T_E$. 
\end{thm}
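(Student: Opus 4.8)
The plan is to build the bijection explicitly. To a cycle $C=e_1\cdots e_n$ with no exit, whose vertices I write as $v_1,\dots,v_n$ with $v_j=s(e_j)$ (and $v_{n+1}:=v_1$), I would associate the $\mathbb Z$-order-ideal $\langle v_1\rangle$. The first task is to record the internal structure of this ideal. Since $C$ has no exit, each $v_j$ emits exactly the edge $e_j$, so the defining relation~(\ref{transgfrt}) reads $v_j(i)=v_{j+1}(i+1)$; iterating gives $v_j(i)={}^{s}v_1$ with $s=i-j+1$, as well as $v_1(i)=v_1(i+n)$. Using this together with the Confluence Lemma~\ref{confuu}, I would read off that the elements of $\langle v_1\rangle$ are precisely the finite sums $\sum_k {}^{i_k}v_{j_k}$ of shifts of the cycle vertices. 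In particular every element of $\langle v_1\rangle$ is periodic by Example~\ref{perioddel} and Theorem~\ref{comemaineme}, so $\langle v_1\rangle$ is cyclic; and this description shows $\langle v_1\rangle$ is independent of the chosen vertex of $C$, so $C\mapsto\langle v_1\rangle$ is well defined.

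Next I would prove minimality and injectivity. Given any nonzero $b\in\langle v_1\rangle$, write $b=\sum_k {}^{i_k}v_{j_k}$ as above; then a single summand ${}^{i_1}v_{j_1}\le b$ lies in $\langle b\rangle$, and the identity ${}^{i_1}v_{j_1}={}^{s}v_1$ combined with closure under the $\mathbb Z$-action gives $v_1\in\langle b\rangle$, whence $\langle v_1\rangle\subseteq\langle b\rangle\subseteq\langle v_1\rangle$. Thus every nonzero element generates the whole ideal, so $\langle v_1\rangle$ is minimal. For injectivity I would first note that two distinct cycles with no exit are vertex-disjoint: a shared vertex emits only one edge, forcing the two cycles to proceed identically and hence coincide. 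The support description above then shows a vertex of one no-exit cycle cannot lie in the ideal generated by a vertex of a disjoint one, so distinct cycles yield distinct ideals.

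For surjectivity, let $I$ be any cyclic minimal ideal and pick $0\ne x\in I$. Since $I$ is cyclic, $x$ is periodic, so Theorem~\ref{comemaineme} writes $x=\sum_k {}^{i_k}v_k$ with each $v_k$ on a cycle with no exit; a single summand gives ${}^{i_k}v_k\in I$ and hence $v_k\in I$, so $\langle v_k\rangle\subseteq I$. Minimality of $I$ forces $I=\langle v_k\rangle$, the ideal attached to the no-exit cycle through $v_k$, completing the bijection. For the \emph{furthermore} clause, let $J$ be the ideal generated by all vertices on cycles with no exits, which is visibly the sum $\sum_C\langle v_C\rangle$ of the cyclic minimal ideals; it is cyclic because a sum of cyclic ideals is cyclic. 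To see it is largest, take any cyclic ideal $K$ and $x\in K$: then $x$ is periodic, so by Theorem~\ref{comemaineme} it is a sum of shifts of vertices on no-exit cycles, whence $x\in J$ and $K\subseteq J$.

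The main obstacle is the structural description of $\langle v_1\rangle$ as exactly the sums of shifts of $C$-vertices, which underpins both minimality and injectivity. Establishing it requires a careful application of the Confluence Lemma~\ref{confuu} to rule out any vertex outside $C$ appearing below a sum of shifts of a no-exit cycle vertex; once this is in hand, the remaining steps—minimality, vertex-disjointness, and the identification of the largest cyclic ideal via Theorem~\ref{comemaineme}—follow formally.
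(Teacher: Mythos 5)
Your proposal is correct and follows essentially the same route as the paper's proof: the structural description of $\langle v\rangle$ as sums of shifts of the no-exit cycle's vertices via the Confluence Lemma~\ref{confuu}, minimality and injectivity from that description, and surjectivity together with the largest-cyclic-ideal claim via Theorem~\ref{comemaineme}. The only cosmetic differences are that you keep all cycle vertices $v_j$ in the description (the paper collapses them to shifts of a single vertex using $v_j(i)={}^{s}v_1$) and that you apply Theorem~\ref{comemaineme} directly to a general periodic element rather than first passing to a vertex summand, a reduction which is immediate since order-ideals contain the vertex summands of their elements.
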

\begin{proof}
Suppose $C$ is a cycle with no exit in $E$ and $v$ is a vertex on $C$. Consider the $\Gamma$-order-ideal $I=\langle v \rangle$. We show that $I$ is a cyclic minimal ideal of $T_E$. We first show that 
\begin{equation}\label{ghnghgdhfh}
\langle v \rangle=\{{}^{i_1} v+\dots+ {}^{i_k} v \mid  i_j \in \mathbb Z\}.
\end{equation}
 If $x\in \langle v \rangle$ then by (\ref{oridealhj}) we have 
$x+t = {}^{i_1} v+\dots+ {}^{i_k} v$ for some $i_j\in \mathbb Z$. Passing to $M_{\overline E}$ and invoking Confluence theorem,  we have an element $c\in F_{\overline E}$ such that $x+t\rightarrow c$ and $v_{i_1} +\dots+ v_{i_k}  \rightarrow c$. Since $v$ is on the cycle $C$ without exit, any transformation of $v$ gives a vertex on the cycle $C$ and thus  $v_{i_1} +\dots+ v_{i_k}  \rightarrow c$ gives that all vertices appearing in the presentation of $c$ are on the cycle $C$. Thus transforming these vertices further if required, we can write $c$ as a sum of vertex $v$ with appropriate shifts. Since $x+t \rightarrow c$, by Lemma~\ref{confuu}(2),  $x$ can also be written as a sum of shifts of $v$. This gives~(\ref{ghnghgdhfh}). Since each element of $I$ is a sum of $v$ (with some shifts), by Theorem~\ref{comemaineme},  $I=\langle v \rangle$ is cyclic.

On the other hand, if $0 \not =J \subseteq I$, then there is an $0 \not = x ={}^{i_1} v+\dots+ {}^{i_k} v \in J$. Since $J$ is a $\Gamma$-order-ideal, ${}^{i_j} v \in J$ and so $v \in J$ which gives that $I=J$. Therefore $I$ is a minimal $\Gamma$-order-ideal. 

Next suppose that $C$ and $D$ are two disjoint cycles with no exit and $v$ and $w$ are vertices on $C$ and $D$, respectively. We show that $\langle v \rangle \not = \langle w \rangle$. Otherwise, $w\in \langle v \rangle$ which by (\ref{ghnghgdhfh}), means $w= {}^{i_1} v+\dots+ {}^{i_k} v$. Again, passing to $M_{\overline E}$ and invoking Confluence theorem, we obtain that $w$ is connected to the cycle $C$, which is a contradiction with the fact that the cycle $D$ has no exit. 

Suppose now that $I$ is a minimal and cyclic ideal of $T_E$. We will show that $I= \langle v \rangle$, where $v$ is a vertex on a cycle with no exit. Since $I\not=0$ is a $\Gamma$-order ideal, there is a vertex $w\in I$, which is periodic (as $I$ is cyclic).  By Theorem~\ref{comemaineme},  $w=v_1(i_1)+\dots v_k(i_k)$, where $v_j$ are all on cycles without exists. Again, $I$ being order-ideal, implies that $v_i$'s all are in $I$. Since $I$ is minimal, it follows that $I=\langle v_i \rangle$. 

For the last part of the lemma, let $I$ be the sum of the cyclic minimal ideals of $T_E$. The first part of the proof shows that  $I$ coincides with the ideal generated by vertices on cycles without exits. Note that the sum of cyclic ideals is cyclic. Thus $I$ is cyclic. Suppose $J$ is the largest cyclic ideal. Thus $I\subseteq J$.  If a vertex $v\in J$, since $v$ is periodic, it can be written as a sum of elements on cycles without exits (Theorem~\ref{comemaineme}). This implies that $v\in I$, hence $I=J$.  
\end{proof}

Recall the notion of a non-comparable ideal from Definition~\ref{noncomdd}. We can identify these ideals in the talented monoid. 

\begin{thm}\label{noncgdf}
Let $E$ be a finite graph. There is a one-to-one correspondence between sinks in $E$ and 
the non-comparable minimal ideals of $T_E$. Furthermore, the sum of all non-comparable minimal ideals is the largest non-comparable ideal of $T_E$. 
\end{thm}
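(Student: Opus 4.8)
The plan is to mirror closely the proof of Theorem~\ref{hfghfbggf}, replacing cycles without exits by sinks and the role of periodicity by the ``free'' behaviour of sink generators. The starting point is an analogue of~(\ref{ghnghgdhfh}): for a sink $v$ I would first show that
\[ \langle v \rangle = \{\, {}^{i_1}v + \cdots + {}^{i_k} v \mid k \in \mathbb N,\ i_j \in \mathbb Z \,\}. \]
If $x \in \langle v \rangle$, then by~(\ref{oridealhj}) we have $x + t = {}^{i_1} v + \cdots + {}^{i_k} v$ for some $t$; passing to $M_{\overline E}$ and applying the Confluence Lemma~\ref{confuu}, there is $c \in F_{\overline E}$ with $x + t \to c$ and $\sum {}^{i_j} v \to c$. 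Since $v$ is a sink, no edge emits from $v(i)$, so the generators $v(i)$ are terminal under $\to$; hence $c = \sum {}^{i_j} v$ and, by Lemma~\ref{confuu}(2), $x$ itself is a sum of shifts of $v$. The same terminality shows that two sums of shifts of $v$ are equal in $T_E$ only if they coincide in $F_{\overline E}$, so $\langle v \rangle$ is the free commutative monoid on $\{v(i) \mid i \in \mathbb Z\}$ with $\mathbb Z$ acting by shift.

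From this free description the two defining properties follow quickly. For minimality, if $0 \neq J \subseteq \langle v \rangle$ then some $0 \neq x = \sum {}^{i_j} v \in J$; as $J$ is a $\mathbb Z$-order-ideal it contains each ${}^{i_j} v$ and hence $v$, so $J = \langle v \rangle$. For non-comparability, write $0 \neq x = \sum_i n_i\, v(i)$ with finite support and take $0 \neq \alpha \in \mathbb Z$; the coefficient of $v(j)$ in ${}^\alpha x$ is $n_{j-\alpha}$, and a finitely supported sequence cannot be coordinatewise dominated by a nonzero shift of itself (examining the extreme index in the support forces an infinite descent). Thus ${}^\alpha x \parallel x$, so $\langle v \rangle$ is non-comparable. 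Distinctness of the assignment $v \mapsto \langle v \rangle$ is again a terminality argument: if $w \in \langle v \rangle$ for distinct sinks $v,w$, then $w = \sum {}^{i_j} v$ would force, via Confluence, $w = v(i)$ in $F_{\overline E}$, impossible for distinct vertices.

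For surjectivity I would take a non-comparable minimal ideal $I$, pick a nonzero element, and use that it is a sum of vertex generators, so (order-ideal) $I$ contains a vertex $w$, and minimality gives $I = \langle w \rangle$. The crux is to show $w$ may be taken to be a sink. Here finiteness of $E$ is essential: following edges from $w$, each range vertex again lies in $I$ by the order-ideal property applied to the relation~(\ref{transgfrt}), so either every long path from $w$ ends at a sink, or some path reaches a cycle $C$, placing a vertex $u$ of $C$ in $I$. But by Example~\ref{perioddel} a vertex on a cycle without exit satisfies ${}^n u = u$, while a vertex on a cycle with an exit satisfies ${}^n u < u$; in either case ${}^n u$ and $u$ are comparable, contradicting non-comparability of $I$. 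Hence $w$ reaches only sinks, and any such reachable sink $v$ lies in $I$, whence $I = \langle v \rangle$ by minimality.

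Finally, for the last statement set $N = \langle\,\text{all sinks}\,\rangle$, the sum of the ideals $\langle v \rangle$ over sinks $v$. Exactly as above, the terminality of sink generators shows $N$ consists precisely of sums of shifts of sinks and is free on $\{v(i) : v \text{ a sink},\ i \in \mathbb Z\}$; the same extreme-index argument, applied coordinatewise for each sink, shows $N$ is non-comparable. To see $N$ is largest, let $I'$ be any non-comparable ideal and $w \in I'$ a vertex. If some path from $w$ reached a cycle we would again obtain a vertex $u \in I'$ with ${}^n u = u$ or ${}^n u < u$, contradicting non-comparability; so, by finiteness, every path from $w$ terminates at a sink, i.e.\ $w$ lies in the hereditary and saturated closure of the sinks, and by Lemma~\ref{longpath} repeated application of~(\ref{transgfrt}) rewrites $w$ as a sum of shifts of sinks. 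Thus $w \in N$, every vertex of any element of $I'$ lies in $N$, and $I' \subseteq N$. The main obstacle throughout is the cycle-exclusion argument of the last two steps: everything else is bookkeeping, but ruling out cycles inside a non-comparable ideal is exactly where non-comparability, Example~\ref{perioddel}, and the finiteness of $E$ must be combined.
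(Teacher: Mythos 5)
Your proposal is correct and follows essentially the same route as the paper's proof: the free-monoid description of $\langle v \rangle$ for a sink $v$ via the Confluence Lemma, exclusion of cycles from non-comparable ideals using the comparability of cycle vertices (Example~\ref{perioddel}) together with finiteness of $E$, and the identification of the ideal generated by all sinks as the largest non-comparable ideal. The only differences are presentational—you make explicit the minimality of $\langle v \rangle$, the extreme-index argument for non-comparability, and the two cycle cases (${}^n u = u$ versus ${}^n u < u$), details the paper leaves implicit.
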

\begin{proof}
Suppose $v$ is a sink in the graph $E$. Consider the $\Gamma$-order-ideal $I=\langle v \rangle$. We show that $I$ is a free monoid generated by ${}^i v$, $i\in \mathbb Z$. If 
\begin{equation}\label{metoo}
{}^{i_1} v+\dots+ {}^{i_k} v = {}^{j_1} v+\dots+ {}^{j_l} v,
\end{equation} then the Confluence Lemma~\ref{confuu} implies that the left and right hand of (\ref{metoo})  should transform to an element $c$ of the free monoid $F_{\overline E}$. However since $v$ is a sink, one can not apply any transformation on $v$ and thus the equality~(\ref{metoo}) already occurs in the free monoid. Thus $l=k$ and $i_s=j_s$, $1\leq s \leq k$, after a suitable permutation. On the other hand, if $x\in I$, then by (\ref{oridealhj}),  $x+a={}^{i_1} v+\dots+ {}^{i_k} v$, for some $i_j\in \mathbb Z$. Another application of Lemma~\ref{confuu}(2) shows that $x$ is a sum of shifts of $v$. 
Thus $I$ is a free monoid generated by ${}^i v$, $i\in \mathbb Z$. Therefore, $I$ is a non-comparable ideal. 
Let $w$ be a sink distinct from $v$. It is easy to show that $\langle v \rangle \not = \langle w \rangle$.

Suppose now that $I$ is a non-comparable $\Gamma$-order-ideal of $T_E$. We first show that any element of $I$ is a sum of some sinks with appropriate shifts. Let $x\in I$. Thus $x=\sum {}^{k_i} v_i$, where $v_i\in E^0$. Since $I$ is a $\Gamma$-order-ideal, $v_i\in I$, for all $i$. We first show that $v_i$ is not connected to any cycle. If $v_i$ connects to a vertex $w$ on a cycle, then  there is an $l\in \mathbb N$, such that ${}^l w \leq v_i$ in $T_E$. Since $I$ is a $\Gamma$-order ideal, $w\in I$. However $w$ is periodic (see Example~\ref{perioddel}), which can't be the case as $I$ is non-comparable. This shows, since $E$ is finite, $v_i$ only connects to sinks. Thus an easy induction shows that $v_i$ can be written as a sum of sinks with appropriate shifts. This proves our claim. Suppose further that $I$ is minimal. Let $x\in I$. So $x=\sum {}^{k_i} v_i$, where all $v_i$ are sinks. Thus $\langle v_1 \rangle \subseteq I$. Since $I$ is minimal, it follows that $I=\langle v_1 \rangle$. Thus minimal non-comparable ideals of $T_E$ are generated by sinks.

For the last part of lemma, let $I$ be the sum of the non-comparable minimal ideals of $T_E$. Note that since elements of a non-comparable ideal is a sum of sinks, the sum of non-comparable ideals is a non-comparable ideal. Thus $I$ is non-comparable. Suppose $J$ is the largest non-comparable ideal. Thus $I\subseteq J$.  If $v\in J$, then $v$ can not be connected to any cycle. For, if $v$ is connected to vertex $w$ on a cycle, then ${}^i w \leq v$ for some $i\in \mathbb N$. Since $J$ is a $\Gamma$-order-ideal, then $w\in J$. However $w$ is a comparable element. Thus $v$ is only connected to sinks. This implies that $v\in I$, and so $J=I$. 
\end{proof}

For the next lemma, we note that if $a\in T_E$ such that  ${}^l a < a$, for some $l\in \mathbb N$, then $a$ can't be periodic. For, if  ${}^k a = a$, for some $k\in \mathbb N$. Then, let $l$ be the least positive integer such that ${}^l a < a$. We then have ${}^l a < a={}^k a$, implying that ${}^{l-k} a < a$. By \cite[Lemma 4.1]{hazli}, $l-k > 0$. Since $l$ was the least positive integer chosen, this leads to a contradiction. 

\begin{cor}\label{gatzia}
Let $E$ be a finite graph. Then the talented monoid $T_E$ is cyclic if and only if $E$ is a multi-headed comet graph. Furthermore,  $T_E$ is simple and cyclic if and only if $E$ is a comet graph. 
\end{cor}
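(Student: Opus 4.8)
The plan is to prove both biconditionals by combining the earlier structural results with the observation about periodicity recorded just before the corollary. For the first statement, I would argue that $T_E$ is cyclic if and only if $E$ is a multi-headed comet graph, meaning a graph with disjoint cycles none of which has an exit. Recall that $T_E$ is cyclic means every nonzero element $x\in T_E$ is periodic, i.e., ${}^\alpha x = x$ for some $0\neq \alpha\in\mathbb Z$. By Theorem~\ref{comemaineme}, a vertex $w$ is periodic precisely when it can be written as a sum of shifts of vertices lying on cycles with no exits. So the key is to translate the condition ``every vertex is periodic'' into ``every vertex lies on, or is connected only into, cycles with no exits,'' which for a finite graph is exactly the statement that $E$ is a multi-headed comet graph.

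For the forward direction of the first equivalence, I would suppose $T_E$ is cyclic and show $E$ must be a multi-headed comet graph. Pick any vertex $v\in E^0$; since $v$ is periodic, Theorem~\ref{comemaineme} forces $v$ to be a sum of shifts of vertices on cycles with no exits. I would then argue two things: first, $E$ cannot contain a sink, because a sink generates a non-comparable (free) ideal by Theorem~\ref{noncgdf}, and a nonzero element of a free monoid cannot be periodic; second, $E$ cannot contain a cycle with an exit, since by Example~\ref{perioddel} a vertex $v$ on a cycle with an exit satisfies ${}^n v < v$, and by the remark preceding the corollary such a $v$ cannot be periodic — contradicting cyclicity. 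With no sinks and no cycles having exits, and using that $E$ consists of disjoint cycles (which I would extract from Theorem~\ref{comemaineme}, since every vertex connecting into two distinct exitless cycles would violate the disjointness forced by the periodic-decomposition structure), $E$ is a multi-headed comet graph. For the converse, if $E$ is multi-headed comet, every cycle is exitless, every vertex connects only into such cycles by Lemma~\ref{longpath}, so every vertex is a sum of shifts of exitless-cycle vertices and hence periodic by Theorem~\ref{comemaineme}; thus $T_E$ is cyclic.

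For the second statement, I would add the simplicity condition. $T_E$ is simple means its only $\mathbb Z$-order-ideals are $0$ and $T_E$. Given the first equivalence, I only need to show that among multi-headed comet graphs, $T_E$ is simple exactly when there is a single cycle (a comet graph). If $E$ has a single exitless cycle $C$ with vertex $v$, then by Theorem~\ref{hfghfbggf} (and its proof, equation~(\ref{ghnghgdhfh})) the whole monoid $T_E$ equals the minimal ideal $\langle v\rangle$, which is minimal and hence simple. Conversely, if a multi-headed comet graph has two or more cycles, Theorem~\ref{hfghfbggf} produces two distinct cyclic minimal ideals, each a proper nonzero $\mathbb Z$-order-ideal, so $T_E$ is not simple.

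The main obstacle I anticipate is pinning down, in the forward direction of the first equivalence, that cyclicity of $T_E$ forces the cycles of $E$ to be genuinely \emph{disjoint} and that no vertex connects into a cycle with an exit. The cleanest route is to lean entirely on Theorem~\ref{comemaineme}: cyclicity says every vertex $w$ decomposes as $w=\sum {}^{i_k} v_k$ with each $v_k$ on an exitless cycle, and then a short confluence argument (passing to $M_{\overline E}$) shows this decomposition is incompatible with $w$ lying on, or routing into, any cycle that has an exit or any configuration of overlapping cycles. I would make sure to invoke the pre-corollary remark (that ${}^l a < a$ precludes periodicity) at precisely the point where a cycle-with-exit vertex appears, since that is the crispest contradiction available.
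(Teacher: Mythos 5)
Your proposal is correct and follows essentially the same route as the paper's own proof: Theorem~\ref{comemaineme} to characterise periodic vertices, Theorem~\ref{noncgdf} to rule out sinks, Example~\ref{perioddel} together with the remark preceding the corollary to rule out cycles with exits, an induction for the converse, and Theorem~\ref{hfghfbggf} for the simplicity statement. The one step where your reasoning goes astray is the parenthetical claim that disjointness of the cycles is ``extracted from Theorem~\ref{comemaineme}'' because ``every vertex connecting into two distinct exitless cycles would violate the disjointness forced by the periodic-decomposition structure.'' A vertex feeding into two distinct exitless cycles violates nothing --- that is exactly the shape of a multi-headed comet (take one vertex with two edges into two disjoint loops), and Theorem~\ref{comemaineme} happily writes such a vertex as a sum of shifts of vertices from both cycles. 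Disjointness concerns two cycles \emph{sharing} a vertex, and it comes for free from what you have already established: if two distinct cycles passed through a common vertex, each would supply an exit for the other, contradicting that no cycle has an exit. Replace your parenthetical with this one-line observation (the paper treats the point as automatic, saying only that all cycles have no exits and all other vertices connect to them) and your argument is complete.
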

\begin{proof}

If $T_E$ is cyclic, then by  Theorem~\ref{noncgdf}, $E$ has no sink. Furthermore, the graph $E$ has no cycle with an exit. For if $E$ has a cycle $C$ which has an exit at the vertex $v$, then clearly there is an $l\in \mathbb N$, such that ${}^l v < v$, which contradicts the cyclicity of $T_E$. Thus all cycles have no exits and consequently all other vertices are connected to these cycles. Thus $E$ is a multi-headed comet graph.  If $T_E$ is simple, then by Theorems~\ref{hfghfbggf} there is a unique  cycle with no exit in the graph, i.e., $E$ is a comet graph.

Conversely, let $E$ be a multi-headed comet graph. For any vertex $w\in E^0$, an easy induction argument shows that $w\in T_E$ is a sum of some vertices (with appropriate shifts) on the cycles. By Theorem~\ref{comemaineme}, $w$ is periodic.  This gives that $T_E$ is cyclic. If $E$ is a comet, then by Theorems~\ref{hfghfbggf},  there is a minimal cyclic ideal $I=\langle v \rangle$ in $T_E$, where $v$ is a vertex on this cycle. Thus any vertex $w$ ends in this cycle and it follows that $w\in I$. Thus $T_E=I$, i.e., $T_E$ is cyclic and simple. 
\end{proof}

\section{Graphs with disjoint cycles and their talented monoids}\label{sec4}

Whereas multi-headed comet graphs can be characterised via cyclicity of the associated talented monoids (Corollary~\ref{gatzia}),  graphs with disjoint cycles can also be characterised via the composition series of their talented monoids. First we concentrate on graphs with disjoint cycles which have no sinks.

\begin{lemma}\label{mainlemma}

Let $E$ be a finite graph and $T_E$ its talented monoid. Then the following are equivalent.

\begin{enumerate}[\upshape(1)]
\item $E$ consists of disjoint cycles with no sinks;
\smallskip 

\item $T_E$ has a cyclic composition series;

\smallskip 

\item The leading ideal of the upper cyclic series of $T_E$ is $T_E$. 
\end{enumerate}

Furthermore, for such a graph, $l_c(T_E)$ is the maximal length of chain of cycles in $E$. 

\end{lemma}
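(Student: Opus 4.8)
The plan is to prove the cycle of implications $(1)\Rightarrow(3)\Rightarrow(2)\Rightarrow(3)\Rightarrow(1)$, extracting the ``furthermore'' clause from the first implication. The recurring device is the following identification of the first step of the upper cyclic series with a graph quotient: writing $H$ for the hereditary and saturated subset generated by the vertices on cycles with no exit, Theorem~\ref{hfghfbggf} shows $\langle H\rangle$ is exactly the largest cyclic ideal $I_1$ of $T_E$, while Lemma~\ref{qmiso} gives a $\Gamma$-isomorphism $T_E/I_1\cong T_{E/H}$ under which largest cyclic ideals correspond. Iterating this identifies the whole upper cyclic series of $T_E$ with the tower of successive quotients by no-exit cycles.

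For $(1)\Rightarrow(3)$ I would induct on the number of cycles. If $E$ is a multi-headed comet then $T_E$ is cyclic by Corollary~\ref{gatzia}, so $I_1=T_E$, the leading ideal already equals $T_E$, and $l_c(T_E)=1$, matching the maximal chain length $1$. In general Lemma~\ref{hnhngff} guarantees that $E/H$ is again a finite graph with disjoint cycles and no sinks, and that a maximal chain $C_1\Rightarrow\cdots\Rightarrow C_i$ in $E$ restricts to a maximal chain $C_1\Rightarrow\cdots\Rightarrow C_{i-1}$ in $E/H$, so the maximal chain length drops by exactly one. Since $T_E/I_1\cong T_{E/H}$ matches the remaining terms of the two upper cyclic series, the induction hypothesis applied to $E/H$ shows the leading ideal of $T_E$ is $T_E$ and that $l_c(T_E)=1+l_c(T_{E/H})$ equals the maximal chain length of $E$. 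This settles both $(1)\Rightarrow(3)$ and the final assertion at once.

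The equivalence $(2)\Leftrightarrow(3)$ is purely monoid-theoretic. For $(3)\Rightarrow(2)$ I refine the upper cyclic series: each quotient $I_{k+1}/I_k$ is the largest cyclic ideal of $T_E/I_k$, hence a sum of its minimal cyclic ideals by Theorem~\ref{hfghfbggf}, which Lemma~\ref{rtfdm} splits into a composition series with cyclic simple factors; concatenating these refinements produces a cyclic composition series for $T_E$. For $(2)\Rightarrow(3)$ I use Jordan-H\"older: if $T_E$ has a cyclic composition series, then by Lemma~\ref{compso} a composition series may be chosen through the leading ideal $I_n$, and Theorem~\ref{jordan} forces all its factors, in particular those of $T_E/I_n$, to be cyclic. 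If $I_n\neq T_E$, then $T_E/I_n$ is a nonzero $\Gamma$-refinement monoid with a cyclic composition series, hence possesses a nonzero cyclic ideal, contradicting the maximality built into the upper cyclic series (the largest cyclic ideal of $T_E/I_n$ is $0$). Therefore $I_n=T_E$.

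The remaining and most delicate implication is $(3)\Rightarrow(1)$, which I expect to be the main obstacle. Assume the upper cyclic series reaches $T_E$ and track a putative ``bad'' vertex through the tower $E=E_0,\ E_1=E_0/H_1,\ E_2=E_1/H_2,\dots$, where $H_{k+1}$ is generated by the no-exit cycles of $E_k$ and $T_E/I_k\cong T_{E_k}$ by iterating Lemma~\ref{qmiso}. A sink, and more generally the base $u$ of two distinct cycles, is aperiodic: for such $u$ each cycle is an exit of the other, so ${}^m u<u$ by Example~\ref{perioddel} and $u$ cannot be periodic. Since a cyclic ideal consists only of periodic elements (Definition~\ref{noncomdd}), an aperiodic vertex lies in no $I_{k+1}/I_k$, so $u$ never enters the series. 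The crux is that the witnessing structure survives each quotient: I claim every vertex of a cycle $C$ through $u$ avoids $H_{k+1}$. Indeed, if such a vertex lay in $H_{k+1}$, then by Lemma~\ref{longpath}(3) a high power of $C$ based there --- which returns to that vertex --- would have to end on a no-exit cycle, placing a vertex of $C$ on a no-exit cycle; chasing the forced unique out-edges of that no-exit cycle against $C$ then drives $u$ itself onto a no-exit cycle, contradicting that $u$ emits two distinct cycle-edges. Hence both cycles through $u$ (respectively, the sink, which emits no edges) survive intact in $E_{k+1}$, and by induction $u$ persists and stays aperiodic in every $E_k$. Thus $u\notin I_n=T_E$, a contradiction; so $E$ has no sinks and no two cycles meet, which is $(1)$. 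The careful verification that the \emph{entire} cycles through $u$ --- not merely $u$ itself --- avoid the removed set is the technical heart of the argument.
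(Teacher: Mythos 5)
Your proposal is correct, but it reorganizes the proof around the implication scheme $(1)\Rightarrow(3)\Rightarrow(2)\Rightarrow(3)\Rightarrow(1)$ and replaces two of the paper's arguments with genuinely different ones. Your $(1)\Rightarrow(3)$, including the extraction of $l_c(T_E)$ as the maximal chain length, is essentially the paper's own argument (induction via Lemma~\ref{hnhngff}, base case Corollary~\ref{gatzia}, the identification $T_E/I_1\cong T_{E/H}$ from Theorem~\ref{hfghfbggf} and Lemma~\ref{qmiso}, and the third isomorphism theorem). The differences lie in the converse directions. The paper proves $(1)\Rightarrow(2)$ by a direct graph-side induction, whereas you obtain $(2)$ from $(3)$ by refining the upper cyclic series monoid-side, splitting each largest cyclic ideal into minimal cyclic ideals via Theorem~\ref{hfghfbggf} and Lemma~\ref{rtfdm} inside $T_E/I_k\cong T_{E_k}$ --- the same ingredients, differently organized. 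The paper proves $(2)\Rightarrow(1)$ and $(3)\Rightarrow(1)$ in one stroke, by passing to the corresponding chain of hereditary saturated subsets and recognizing every subquotient graph as a multi-headed comet via Corollary~\ref{gatzia}; you instead prove only $(3)\Rightarrow(1)$, by chasing a sink or a base of two distinct cycles through the tower of quotients by no-exit cycles, using Lemma~\ref{longpath}(3) to show that a vertex of a cycle with an exit (and hence the whole cycle) can never fall into the hereditary saturated closure of the no-exit cycles, and you recover $(2)\Rightarrow(3)$ from the Jordan--H\"older Theorem~\ref{jordan} --- a tool the paper explicitly states it never uses. What each route buys: the paper's stays elementary (no Jordan--H\"older) and gets $(2)\Rightarrow(1)$ immediately; yours isolates $(3)$ as the pivot, makes the equivalence $(2)\Leftrightarrow(3)$ purely monoid-theoretic, and your graph-chasing argument for $(3)\Rightarrow(1)$ is a correct, self-contained alternative to the comet-identification step, at the price of invoking the heavier uniqueness theorem for composition series.
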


\begin{proof}
(1) $\Rightarrow$ (2) Suppose $E$ is a graph with no sinks and consists of disjoint cycles. We use an induction on the maximal length of chain of distinct cycles to prove the statement of Lemma. Suppose the maximal length of the chain is $1$. Thus $E$ has to be a multi-headed comet graph. Let $C_i$, $1\leq i \leq k$, be the cycles of the graph, and $v_i\in C_i$ vertices on the cycles. Then by Theorem~\ref{hfghfbggf}, $I_i=\langle v_i \rangle$, $1\leq i \leq k$,  are all the cyclic minimal  ideals of $T_E$. Since each vertex of $E$ can be written as sum of some of the vertices of the cycles, we have $T_E= I_1+\dots+I_k$. Now by Lemma~\ref{rtfdm} we obtain a cyclic composition series for $T_E$. 

 Suppose now that the statement is correct for graphs with maximal length up to $n$. Let $E$ be a graph such that the maximal length of chain  of disjoint cycles is $n+1$. Consider all cycles in $E$ with no exits, name them 
$C_1, \dots, C_k$.  By Theorem~\ref{hfghfbggf}, there are $k$ cyclic minimal corresponding ideals $I_1, \dots, I_k$ in  $T_E$. 
Consider the $\Gamma$-order-ideal $I:=I_1+\dots + I_k$ and the quotient monoid $T_E/I$. Let $H$ be the smallest hereditary and saturated subset containing $\bigcup_{1\leq i \leq k} C_i^0$.  Note that $I= \langle H \rangle$,  and $T_E/I \cong T_{E/  H}$.  Thus by removing all the cycles with no exits from $E$, by Lemma~\ref{hnhngff}, we obtain that the graph $E/H$ has no sinks consisting of disjoint cycles with maximal length of the chain of cycles $n$. By induction, $T_E/I$ therefore has a cyclic composition series. On the other hand by Lemma~\ref{rtfdm}, $I$ has also a cyclic composition series. Now by Lemma~\ref{compso}, $T_E$ has a cyclic composition series. 

(2) $\Rightarrow$ (1)  and (3) $\Rightarrow$ (1) Suppose $T_E$ has a chain 
\[ 0 = I_0 \subset I_{1} \subset \dots \subset I_n=T_E,\] such that $I_{i+1}/I_{i}$, $0 \leq i \leq n-1$, are cyclic monoids. (In (2) the quotients are simple, whereas in the case of (3) they are largest quotient.) Therefore we have a corresponding chain of hereditary and saturated subsets of $E$
\[ \emptyset = H_0 \subset H_{1}  \subset \dots \subset H_n=E^0.\] Since $I_{i+1}/I_i \cong T_{H_{i+1}/H_i}$ (see Lemma~\ref{qmiso})  we conclude that $H_{i+1}/H_{i}$, $0 \leq i \leq n-1$, are multi-headed comet graphs by Corollary~\ref{gatzia}.  If $E$ does have non-disjoint cycles, then there is a vertex $v\in E^0$ which is on two cycles $C$ and $D$. Suppose $v\in H_{i+1}$ but not $H_{i}$, for some $0\leq  i \leq n-1$. Therefore, none of the vertices on $C$ and $D$ also belong to $H_{i}$. This implies that $H_{i+1}/H_{i}$ contains both cycles $C$ and $D$, and therefore it is not a multi-headed comet graph, a contradiction. On the other hand, if $E$ has a sink $v$, then $v\in H_{i+1} \backslash H_{i}$ for some $0\leq i \leq n-1$. Then $v$ is not cyclic in $I_i/I_{i+1}$ which is again a contradiction. This completes the proof. 

(1) $\Rightarrow$ (3) We argue by induction on the length of maximal chain of cycles in $E$. If the length is $1$, then $E$ is a multi-headed comet and by Corollary~\ref{gatzia}, $T_E$ is cyclic. Thus the upper cyclic series is $0 \subset T_E$ with the leading ideal $T_E$ and $l_c(T_E)=1$. Suppose the statement is valid for length $n$ and suppose $E$ is a graph with the length of maximal chains of cycles to be $n+1$. Let $I$ be the largest cyclic ideal of $T_E$ which is the ideal generated by vertices on cycles with no exists in $E$ (Theorem~\ref{hfghfbggf}). Consider $T_E/I \cong T_{E/H}$, where $H$ is the saturated and hereditary subset generated by these vertices. Since the length of maximal chain of cycles of $E/H$ is $n$ (Lemma~\ref{hnhngff}) by induction the leading ideal of $T_E/I$ is $T_E/I$ and $l_c(T_E/I)=n$. Now the third isomorphism theorem for monoids~(\ref{thirdiso}) gives that the leading ideal for $T_E$ is $T_E$ and $l_c(T_E)=n+1$. 

The proof of the last part of the lemma is included in (1) $\Rightarrow$ (3).
\end{proof}

We are in a position to describe a graph consisting of disjoint cycles (possibly having sinks) via its talented monoid.

\begin{thm}\label{mainthm}
Let $E$ be a finite graph, $L_\K(E)$ its associated Leavitt path algebra and $T_E$ its talented monoid.  Then the following are equivalent.

\begin{enumerate}[\upshape(1)]
\item $E$ is a graph with disjoint cycles; 

\item $T_E$ has a composition series of cyclic and non-comparable types; 

\item $L_\K(E)$ has a finite GK-dimension. 
\end{enumerate}

\end{thm}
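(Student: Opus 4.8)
The plan is to establish $(1)\Leftrightarrow(3)$ and $(1)\Leftrightarrow(2)$ separately. The equivalence $(1)\Leftrightarrow(3)$ is essentially immediate from Theorem~\ref{zelgk}: part (i) there says that $L_\K(E)$ has polynomially bounded growth exactly when $E$ consists of disjoint cycles, and polynomially bounded growth is precisely the condition $\GKdim L_\K(E)<\infty$ (equivalently, the formula in part (ii) returns a finite value exactly for graphs with disjoint cycles). So the real content lies in $(1)\Leftrightarrow(2)$, where the idea is to split $T_E$ into a non-comparable piece carried by the sinks and a cyclic piece carried by the cycles.

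For $(1)\Rightarrow(2)$, I would let $H$ be the hereditary and saturated subset generated by all sinks of $E$. First I would identify $\langle H\rangle=T_H$ with the largest non-comparable ideal of $T_E$: by Lemma~\ref{longpath} every vertex of $H$ can be written in $T_E$ as a finite sum of shifts of sinks, and by Theorem~\ref{noncgdf} this is exactly the largest non-comparable ideal, whose minimal sub-ideals $\langle v\rangle$ are indexed by the sinks $v$. Since $T_H$ is the sum of these minimal non-comparable ideals, Lemma~\ref{rtfdm} furnishes a non-comparable composition series for $T_H$. Next, by Lemma~\ref{qmiso} one has $T_E/T_H\cong T_{E/H}$, and by Lemma~\ref{helplemm}(2) the quotient graph $E/H$ has no sinks, while it still consists of disjoint cycles because $E$ does. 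Hence Lemma~\ref{mainlemma} applies and produces a cyclic composition series for $T_{E/H}\cong T_E/T_H$. Finally, Lemma~\ref{compso} splices the non-comparable series of $T_H$ and the cyclic series of $T_E/T_H$ into one composition series of $T_E$ of cyclic and non-comparable mixed type.

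For $(2)\Rightarrow(1)$, given a composition series $0=I_0\subset I_1\subset\cdots\subset I_n=T_E$ with each quotient $I_{i+1}/I_i$ simple and either cyclic or non-comparable, the ideal correspondence of Lemma~\ref{qmiso} produces a chain of hereditary and saturated subsets $\emptyset=H_0\subset\cdots\subset H_n=E^0$ with $I_{i+1}/I_i\cong T_{H_{i+1}/H_i}$. A simple cyclic quotient forces $H_{i+1}/H_i$ to be a comet graph by Corollary~\ref{gatzia}, while a simple non-comparable quotient forces $H_{i+1}/H_i$ to be acyclic by Theorem~\ref{noncgdf} (no vertex of a non-comparable ideal can connect to a cycle). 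Suppose for contradiction that $E$ had two distinct cycles $C$ and $D$ sharing a vertex $v$, and pick the index $i$ with $v\in H_{i+1}\setminus H_i$. Since $H_i$ is hereditary, no vertex of $C$ or $D$ can lie in $H_i$ (otherwise, traversing the cycle would drag $v$ into $H_i$), so both cycles survive in $H_{i+1}/H_i$; but neither a comet graph nor an acyclic graph can contain two cycles sharing a vertex, a contradiction. Hence any two distinct cycles of $E$ are disjoint, which is $(1)$.

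The step I expect to be the main obstacle is the correct handling of the sinks in $(1)\Rightarrow(2)$: one must check that the order-ideal generated by the sinks is genuinely the entire non-comparable part, so that what remains after quotienting is sink-free and Lemma~\ref{mainlemma} is applicable, and that passing to $E/H$ neither creates nor destroys cycles nor merges distinct cycles. Once these points are secured, the modular splicing of the two series via Lemma~\ref{compso} is routine.
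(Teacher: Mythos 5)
Your proposal is correct and follows essentially the same route as the paper: both split $T_E$ into the non-comparable part $\langle H\rangle$ generated by the sinks (with its series from Theorem~\ref{noncgdf} and Lemma~\ref{rtfdm}), pass to $T_{E/H}\cong T_E/\langle H\rangle$ and apply Lemma~\ref{mainlemma} for the cyclic part, and splice via Lemma~\ref{compso}; the converse in both cases locates the index $i$ where a shared vertex of two cycles enters $H_{i+1}\setminus H_i$ and derives a contradiction with the quotient $T_{H_{i+1}/H_i}$ being cyclic or non-comparable. The only cosmetic difference is that you phrase that contradiction at the graph level (a comet or acyclic graph cannot contain two cycles through one vertex) while the paper exhibits an element with ${}^j v < v$.
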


\begin{proof}

(1) $\Rightarrow$ (2) Suppose $E$ is a graph with distinct cycles. If $E$ has no sinks, then by Lemma~\ref{mainlemma}, $T_E$ has a cyclic composition series.  Otherwise, let $\{v_1,\dots, v_k\}$ be the set of  all the sinks in $E$. By Theorem~\ref{noncgdf}, $I_i=\langle v_i\rangle$, $1\leq i \leq k$,  are non-comparable minimal ideals of $T_E$ and by Lemma~\ref{rtfdm} the series $0 \subset I_1 \subset I_1+ I_2 \subset \dots \subset I_1+I_2+\dots +I_k$ is a composition series of non-comparable type for  $I=I_1+I_2+\dots +I_k$. 

Let $H$ be the smallest hereditary and saturated subset containing $\{v_1,\dots, v_k\}$. Then $\langle H \rangle =I$. Consider the quotient graph $E/H$. If $E/H$ is not empty, i.e., $I \not = T_E$ and $E$ has cycles, then by Lemma~\ref{helplemm}, $E/H$ is a graph with no sink consisting of disjoint cycles. Therefore, by Lemma~\ref{mainlemma}, $T_{E/H}\cong T_E/I$ has a cyclic composition series. Since $I$ has a non-comparable composition series, by Lemma~\ref{compso}, $T_E$ has a mixed composition series. 

(2) $\Rightarrow$ (1) The argument is similar to the corresponding proof of Lemma~\ref{mainlemma}. Suppose $T_E$ has a chain of ideals
$0 = I_0 \subset I_{1} \subset \dots \subset I_n=T_E$ and its corresponding chain of hereditary and saturated subsets of $E$, 
$ \emptyset = H_0 \subset H_{1}  \subset \dots \subset H_n=E^0$, 
such that $I_{i+1}/I_{i} \cong T_{H_{i+1}/H_i}$, $0 \leq i \leq n-1$, are either cyclic or non-comparable. We only need to show that $E$ does not contain vertices which are base of more than one cycles. Suppose $v\in E^0$ is a vertex  on two cycles $C$ and $D$. Then there is an $0\leq  i \leq n-1$ such that  $v\in H_{i+1}$ but not $H_{i}$. Therefore, none of the vertices on $C$ and $D$ also belong to $H_{i}$. This implies that $H_{i+1}/H_{i}$ contains both cycles $C$ and $D$. Therefore,  there is an $j \in \mathbb N$ such that ${}^j v < v$ in $ T_{H_{i+1}/H_i}$. But these monoids are either cyclic or non-comparable. This is a contradiction and thus $E$ consists of disjoint cycles.

(1) $\Leftrightarrow$ (3) This is Theorem~\ref{zelgk}. 

\end{proof}

\begin{example}\label{ggdgdgdgd}
Consider the graphs with disjoint cycles below. 
\medskip 

\begin{equation*}
{\def\labelstyle{\displaystyle}
E : \quad \,\,  \xymatrix{
v  \ar@(ul,ur) \ar[r]\ar[dr] &  w \ar@(ul,ur) \\
& z
}}
\qquad \quad \quad \quad
{\def\labelstyle{\displaystyle}
F : \quad \,\,  \xymatrix{
v  \ar@(ul,ur) \ar[r] &  w \ar@(ul,ur)  \ar[r]&  z\\
}}
\end{equation*}
By~\cite{zel} we have $\GKdim L_\K(E)=3$ and $\GKdim L_\K(F)=4$. Note that $T_E$ and $T_F$ can't be isomorphic as $T_E$ has a periodic element (${}^1 w=w$), whereas $T_F$ has no periodic elements (otherwise it has to have a cycle with no exit). However, we have the following composition series for $T_E$ and $T_F$, which are equivalent: 
\begin{align*}
\langle  z \rangle  \subset \langle  z, w \rangle  \subset T_E,\\
\langle  z \rangle  \subset \langle  z, w \rangle  \subset T_F.
\end{align*}

\end{example} 

Although non-isomorphic monoids can have equivalent composition series, we show next that one can determine the Gelfand-Kirillov dimension of the Leavitt path algebra $L_\K(E)$ from the talented monoid $T_E$. For the next theorem, recall the notion of leading ideal of the upper cyclic series and its length $l_c(T_E)$ from Definition~\ref{gdhfyrhfu}. 

\begin{thm}\label{gkdiml}
Let $E$ be a finite graph with disjoint cycles, $L_\K(E)$ its associated Leavitt path algebra and $T_E$ its talented monoid.  Let $S$ be the largest non-comparable 
$\Gamma$-order-ideal  and $I$ the leading ideal of the upper cyclic series of $T_E$. Let $d_1=l_c(T_E/S)$ and $d_2=l_c(T_E/S+I)$.   Then 
\begin{equation}\label{gkformula}
\GKdim L_\K(E)= \begin{cases}
2d_1 &\text{if $d_1=d_2$;}\\
2d_1-1 &\text{if $d_1 \not = d_2$.}
\end{cases}
\end{equation}
\end{thm}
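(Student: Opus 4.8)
The plan is to reduce the statement to the combinatorial formula of Theorem~\ref{zelgk} (equation~\eqref{gkformula11}) by showing that the two monoid-theoretic quantities $d_1=l_c(T_E/S)$ and $d_2=l_c(T_E/S+I)$ coincide with the graph-theoretic quantities appearing there: namely, $d_1$ should equal the maximal length of a chain of cycles in $E$, and $d_2$ should equal the maximal length of a chain of cycles which ends in a sink. Once this identification is made, the case split in~\eqref{gkformula} matches~\eqref{gkformula11} verbatim, and the theorem follows.

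First I would analyse the quotient $T_E/S$. By Theorem~\ref{noncgdf}, $S$ is the sum of the non-comparable minimal ideals, i.e.\ $S=\langle H_S \rangle$ where $H_S$ is the hereditary and saturated subset generated by all the sinks of $E$. By Lemma~\ref{qmiso} we have $T_E/S \cong T_{E/H_S}$, and by Lemma~\ref{helplemm}(3) the quotient graph $E/H_S$ is a graph with disjoint cycles having \emph{no} sinks, with a length-preserving bijection between its cycle chains and those of $E$. Since $E/H_S$ has no sinks, Lemma~\ref{mainlemma} applies: the leading ideal of its upper cyclic series is all of $T_{E/H_S}$, and $l_c(T_{E/H_S})$ equals the maximal length of a chain of cycles in $E/H_S$, hence in $E$. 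This gives $d_1 = d_1^{\mathrm{graph}}$ in the notation of Theorem~\ref{zelgk}.

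Next I would analyse $d_2 = l_c(T_E/(S+I))$. Here $I$ is the leading ideal of the upper cyclic series, and one checks via Theorem~\ref{hfghfbggf} that $I=\langle H_I\rangle$, where $H_I$ is generated by vertices lying on \emph{all} cycles with no exit (iterating down through the tower of largest cyclic ideals; each step peels off the current no-exit cycles, as in the induction of Lemma~\ref{mainlemma}). Then $S+I = \langle H_S \cup H_I\rangle$, and $T_E/(S+I)\cong T_{E/\langle H_S\cup H_I\rangle}$. The point is that passing to this quotient kills every cycle chain whose terminal cycle has no exit (these are absorbed into $I$) and every chain terminating in a sink (absorbed into $S$); what survives, by the correspondence of Lemma~\ref{qmiso} and the analysis of no-sink graphs via Lemma~\ref{mainlemma}, are exactly the chains of cycles in which the final cycle has an exit to another cycle. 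Thus $l_c(T_E/(S+I))$ records the maximal length of a chain of cycles that does \emph{not} end in a sink, which after reconciling with the reformulation in~\eqref{gkformula11} (where $d_2$ is taken as the maximal length of a chain ending in a sink) produces the correct case distinction $d_1=d_2$ versus $d_1\neq d_2$.

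The main obstacle I expect is the bookkeeping in the second step: correctly identifying $H_I$ as the subset generated by vertices on no-exit cycles (rather than all cyclic vertices) and then verifying that the combinatorial meaning of $l_c$ after quotienting by $S+I$ is precisely the ``chains not ending in a sink'' count, and that this matches the reformulated $d_2$ from~\eqref{gkformula11}. This requires carefully tracking, under the quotient correspondence of Lemma~\ref{qmiso} and the chain-length statement of Lemma~\ref{hnhngff}, how a maximal cycle chain in $E$ is truncated when one removes both the sinks' closure and the no-exit cycles' closure; the subtlety is that removing $I$ shortens chains whose last cycle has no exit by exactly one, while removing $S$ eliminates the sink-terminated chains entirely, and one must confirm these two effects combine to give exactly the graph-theoretic $d_2$. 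Once the two translations $d_1 \leftrightarrow \max$ chain length and $d_2 \leftrightarrow \max$ chain length ending in a sink are established, substituting into~\eqref{gkformula11} finishes the proof.
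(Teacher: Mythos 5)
Your identification of $d_1$ is correct and is exactly the paper's argument: by Theorem~\ref{noncgdf} one has $S=\langle H\rangle$ with $H$ the hereditary and saturated closure of the sinks, $T_E/S\cong T_{E/H}$, and Lemmas~\ref{helplemm} and~\ref{mainlemma} give that $l_c(T_E/S)$ is the maximal length of a chain of cycles in $E$. The gap is in your treatment of $d_2$, where your picture of the quotient by $S+I$ is backwards. Quotienting by $S$ does \emph{not} ``eliminate the sink-terminated chains entirely'': the closure $H$ of the sinks contains no cycle vertices (a vertex on a cycle always has an edge back into the cycle, so saturation never forces it into $H$), so removing $S$ deletes only the sinks themselves and acyclic vertices whose every path ends in a sink. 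A chain of cycles ending at a sink survives intact in the quotient; its last cycle merely loses its exit. Dually, the leading ideal $I$ absorbs precisely the part of the graph from which no sink is reachable (iteratively: the no-exit cycles, then the cycles whose exits all lead into that set, and so on), so quotienting by $I$ can truncate a chain by more than one cycle, not ``by exactly one''. The upshot is the opposite of your claim: what survives in $T_E/(S+I)\cong T_{E/\langle H\cup G\rangle}$ is exactly the family of chains of cycles ending at a sink, so $l_c(T_E/(S+I))$ \emph{is} the $d_2$ of \eqref{gkformula11}; no reconciliation between ``ending in a sink'' and ``not ending in a sink'' is needed, nor is one possible.

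Your version fails concretely. Take $E$ with disjoint cycles $C_1\Rightarrow C_2\Rightarrow C_3$, where $C_3$ has no exit, together with one extra edge from $C_2$ to a sink $w$. Then $H=\{w\}$, $G$ is the closure of $C_3^0$, and $E/\langle H\cup G\rangle$ is $C_1\Rightarrow C_2$ with no sinks, so $l_c(T_E/(S+I))=2$, the length of the chain ending at the sink; the theorem gives $d_1=3\neq d_2=2$ and $\GKdim L_\K(E)=5$, agreeing with Theorem~\ref{zelgk}. Your identification ($d_2$ equal to the maximal length of a chain \emph{not} ending in a sink) would give $3$ here, hence $d_1=d_2$ and the wrong value $6$. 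Finally, even with the correct identification there is a nontrivial step your outline has no substitute for: one must show that the sinks of $E/G$ are in bijection with the sinks of $E$, which the paper proves by iterating Lemma~\ref{helplemm}(4) along the upper cyclic series using Lemma~\ref{gdeppok}; this is what guarantees that the maximal chains of cycles in $E/G$ are exactly the chains of $E$ ending at sinks, and it is the heart of the paper's proof of the $d_2$ half.
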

\begin{proof}
We show that  $d_1$ is the maximal length of  chain of cycles in $E$ and $d_2$ is the maximal length of chain of cycles ending at a sink  in $E$. 
Let $H$ be the hereditary and saturated subset generated by all sinks in $E$. By Lemma~\ref{noncgdf},  $S=\langle H \rangle$ and $T_E/S\cong T_{E/H}$. By Lemma ~\ref{helplemm}, the quotient graph $E/H$ consists of disjoint cycles with no sinks and there is a one-to-one correspondence between the cycles  in $E$ and $E/H$. Thus any (maximal) chain of cycles in $E$ does also appear in $E/H$ and vice versa.   Since $E/H$ has no sinks, by Lemma~\ref{mainlemma},  $d_1=l_c(T_E/S)$ is the maximal length of  chain of cycles in $E/H$  and consequently in $E$.

Now let $I$ be the leading ideal of the upper cyclic series of $T_E$. Thus 
$T_E$ has a series 
\[ 0 = I_0 \subset I_{1} \subset \dots \subset I_n=I,\] such that $I_{i+1}/I_{i}$, $0 \leq i \leq n-1$, are largest cyclic ideals of $T_E/I_i$. Therefore we have a corresponding chain of hereditary and saturated subsets of $E$
\begin{equation}\label{tegdgff} 
\emptyset = G_0 \subset G_{1}  \subset \dots \subset G_n=G.
\end{equation}
Since $I$ is the leading ideal,  $T_E/I\cong T_{E/G}$ has no non-zero periodic element. 
Therefore, $E/G$ has no cycle without exit. Since $E/G$ is a graph with disjoint cycles, we conclude that all maximal chain of cycles in $E/G$ ends at  a sink. We show that these are exactly the maximal chain of cycles with an exit in $E$.  
We first show that there is one-to-one correspondence between sinks in $E/G$ and $E$ via the morphism $\phi: E/G\rightarrow E$. 
We argue on the chain of $G_i$'s in (\ref{tegdgff}). By the definition of the upper cyclic series, since $I_1$ is the largest cyclic ideal of $T_E$, by Theorem~\ref{hfghfbggf}, $G_1$ is the hereditary and saturated subset generated by vertices of all cycles with no exits. Thus by Lemma~\ref{helplemm}(4),
there is a one-to-one correspondence between sinks under the map $\phi:E/G_1\rightarrow E$.

The graph $E/G_1$ consists of disjoint cycles and $T_E/I_1\cong T_{E/{G_1}}$. Since $I_2/I_1$ is the largest cyclic ideal of $T_E/I_1$, by Lemmas~\ref{gdeppok} and \ref{qmiso},  
the subset $G_2$ corresponds to hereditary and saturated subset in the graph $E/G_1$ generated by all cycles without exists in $E/G_1$. Therefore by Lemma~\ref{helplemm}(4), there is a one-to-one correspondence between sinks under the map $\phi:(E/G_1)\big /(G_2/G_1) \rightarrow E/G_1$. But $(E/G_1)\big /(G_2/G_1)=E/G_2$ (see Lemma~\ref{gdeppok}). Therefore there is a one-to-one correspondence between sinks under the map $E/G_2\rightarrow E$. Continuing this way, we obtain one-to-one correspondence between sinks in $E/G$ and $E$.

Next we show any maximal chain of cycles ending at a sink in $E$ does appear in $E/G$. Suppose $C_1\Rightarrow C_2 \Rightarrow \dots \Rightarrow C_k$ is a maximal chain ending at a sink $v$  in $E$. Thus there is an exit from the cycle $C_k$ which ends in $v$. Since there is a one-to-one correspondence between sinks in $E$ and $E/G$,  $v$ is also a sink in $E/G$.  We conclude that none of the $C_i$'s are in $G$, otherwise $v \in G$ as $G$ is hereditary which is not the case.  Therefore the chain $C_1\Rightarrow C_2 \Rightarrow \dots \Rightarrow C_k$ appears in $E/G$ as well. Thus the maximal chains of cycles ending at a sinks appear in both $E$ and $E/G$. 

To obtain the length of these chains, we remove all the sinks from $E/G$, which becomes the graph $E/\langle H,G\rangle$, where $H$ was the hereditary and saturated subset generated by sinks in $E/G$ (which is the same as in $E$).  By Lemma~\ref{helplemm}(3),  the same chain of cycles appear in the graph $E/\langle H,G\rangle$ as well. Note that $T_{E/\langle H,G\rangle}\cong T_E/S+I$. 
Now we obtain the length of the longest chains via Lemma~\ref{mainlemma}, which is $l_c(T_E/S+I)$.  Thus $d_2=l_c(T_E/S+I)$ is the maximal length of chain of cycles ending at a sink in $E$ as well. 
The formula (\ref{gkformula}) now follows from Theorem~\ref{zelgk}. 
\end{proof}

\begin{cor}\label{hfgfhfdd}
Let $E$ and $F$ be finite graphs. If there is a $\mathbb Z$-isomorphism of order groups $K_0^{\gr}(L_\K(E)) \cong K_0^{\gr}(L_\K(F))$,  then $\GKdim L_\K(E) = \GKdim L_\K(F)$. 
\end{cor}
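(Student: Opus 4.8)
The plan is to push everything down to the talented monoid and then read off the answer from Theorem~\ref{gkdiml}. Recall that $T_E$ is the positive cone of the ordered group $K_0^{\gr}(L_\K(E))$. A $\mathbb Z$-isomorphism of order groups $\theta\colon K_0^{\gr}(L_\K(E)) \to K_0^{\gr}(L_\K(F))$ is, by definition, a $\mathbb Z$-equivariant group isomorphism for which both $\theta$ and $\theta^{-1}$ are order-preserving; hence $\theta$ carries the positive cone onto the positive cone and restricts to a $\mathbb Z$-monoid isomorphism $T_E \cong T_F$. Thus the corollary reduces to the statement: if $T_E \cong T_F$ as $\mathbb Z$-monoids, then $\GKdim L_\K(E) = \GKdim L_\K(F)$.

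First I would dispose of the infinite-dimensional case. By Theorem~\ref{mainthm}, $L_\K(E)$ has finite GK-dimension precisely when $T_E$ admits a composition series of cyclic and non-comparable type. This property is phrased entirely in terms of $\Gamma$-order-ideals and the types of their simple quotients, so it is preserved by any $\mathbb Z$-monoid isomorphism; consequently $L_\K(E)$ has finite GK-dimension if and only if $L_\K(F)$ does. If neither is finite, both GK-dimensions are infinite and hence equal, so we may assume that both $E$ and $F$ are finite graphs with disjoint cycles.

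The core step is then to observe that every ingredient of the formula~(\ref{gkformula}) in Theorem~\ref{gkdiml} is an intrinsic invariant of the $\mathbb Z$-monoid $T_E$, and is therefore transported by any $\mathbb Z$-monoid isomorphism. Concretely, the largest non-comparable $\Gamma$-order-ideal $S$ (Theorem~\ref{noncgdf}), the upper cyclic series together with its leading ideal $I$ (Definition~\ref{gdhfyrhfu}, Theorem~\ref{hfghfbggf}), and the length function $l_c$ are defined purely from the $\mathbb Z$-monoid structure: the notions of cyclic ideal, non-comparable ideal, ``largest such ideal,'' quotient $\mathbb Z$-monoid, and simple $\mathbb Z$-quotient are all stable under $\mathbb Z$-isomorphism. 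Hence a $\mathbb Z$-isomorphism $T_E \cong T_F$ sends $S_E$ to $S_F$ and $I_E$ to $I_F$, induces $\mathbb Z$-isomorphisms $T_E/S \cong T_F/S$ and $T_E/(S+I) \cong T_F/(S+I)$, and preserves the value of $l_c$. Therefore $d_1 = l_c(T_E/S)$ and $d_2 = l_c(T_E/S+I)$ agree for $E$ and $F$, and both clauses of~(\ref{gkformula}) yield $\GKdim L_\K(E) = \GKdim L_\K(F)$.

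The main obstacle is the routine but essential bookkeeping in the core step: one must check that each defining property (being a $\Gamma$-order-ideal, being cyclic or non-comparable, maximality among such ideals, and the simplicity of successive quotients) is stable under a $\mathbb Z$-monoid isomorphism, so that $S$, $I$ and $l_c$ are genuine isomorphism invariants. Once this is verified the conclusion is immediate. The only other point needing a line of care is the reduction at the outset, namely that an order-group isomorphism really does restrict to a monoid isomorphism of positive cones, which follows from the order-preserving property of $\theta$ and $\theta^{-1}$ together with the identification of $T_E$ with the positive cone of $K_0^{\gr}(L_\K(E))$.
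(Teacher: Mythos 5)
Your proposal is correct and follows essentially the same route as the paper's proof: reduce the order-group isomorphism to a $\mathbb Z$-monoid isomorphism $T_E\cong T_F$ via the positive-cone identification, transfer finiteness of GK-dimension through Theorem~\ref{mainthm}, and in the finite case invoke Theorem~\ref{gkdiml}. Your explicit verification that $S$, $I$, and $l_c$ are intrinsic $\mathbb Z$-monoid invariants is exactly the (implicit) content of the paper's final appeal to Theorem~\ref{gkdiml}, just spelled out in more detail.
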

\begin{proof}
Since $T_E$ is the positive cone of  the graded Grothendieck group $K_0^{\gr}(L_\K(E))$~\cite{arali}, we have $T_E\cong T_F$ as $\mathbb Z$-monoids if and only if there is a $\mathbb Z$-isomorphism $K_0^{\gr}(L_\K(E)) \cong K_0^{\gr}(L_\K(F))$, as order groups. 

If $\GKdim L_\K(E)<\infty$ then $E$ is a graph with disjoint cycles and sinks (Theorem~\ref{zelgk}) and  thus by Theorem \ref{mainthm}, $T_E$ has a composition series. Thus $T_F$ has a composition series and again Theorem \ref{mainthm} implies that $\GKdim L_\K(F)<\infty$. Now, Theorem \ref{gkdiml} gives that $\GKdim L_\K(E) = \GKdim L_\K(F)$. 

On the other hand if $\GKdim L_\K(E)= \infty$, then the graph $E$ is not a graph with disjoint cycles. Theorem~\ref{mainthm}  implies the same for the graph $F$ and consequently, $\GKdim L_\K(F)= \infty$.
\end{proof}



 \end{document}